\newcommand{\poly}[1]{\Pi_{#1}}
\newcommand{\polyq}[2]{\Pi_{#1,#2}}
\crefname{hypothesis}{Hypothesis}{Hypotheses}
\title{Corner cases of the tau method: symmetrically imposing boundary conditions on hypercubes}
\author{
    \mbox{Keaton J. Burns}\thanks{Corresponding author. Department of Mathematics, Massachusetts Institute of Technology, Cambridge, MA 02139 
      (\email{kjburns@mit.edu}).}
    \and \mbox{Daniel Fortunato}\thanks{Center for Computational Mathematics, Flatiron Institute, New York, NY 10010.}
    \and \mbox{Keith Julien}\thanks{The author is deceased. Former address: Department of Applied Mathematics, University of Colorado Boulder, Boulder, CO 80309.}
    \and \mbox{Geoffrey M. Vasil}\thanks{School of Mathematics, University of Edinburgh, Edinburgh, EH9 3FD, United Kingdom.}
}
\begin{document}

\maketitle

\begin{abstract}
Polynomial spectral methods produce fast, accurate, and flexible solvers for broad ranges of PDEs with one bounded dimension, where the incorporation of general boundary conditions is well understood.
However, automating extensions to domains with multiple bounded dimensions is challenging because of difficulties in imposing boundary conditions at shared edges and corners.
Past work has included various workarounds,  such as the anisotropic inclusion of partial boundary data at shared edges or approaches that only work for specific boundary conditions.
Here we present a general system for imposing boundary conditions for elliptic equations on hypercubes.
We take an approach based on the generalized tau method, which allows for a wide range of boundary conditions for many different spectral schemes.
The generalized tau method has the distinct advantage that the specified polynomial residual determines the exact algebraic solution; afterwards, any stable numerical scheme will find the same result. 
We can, therefore, provide one-to-one comparisons to traditional collocation and Galerkin methods within the tau framework. 
As an essential requirement, we add specific tau corrections to the boundary conditions, in addition to the bulk PDE, which produce a unique set of compatible boundary data at shared subsurfaces.
Our approach works with general boundary conditions that commute on intersecting subsurfaces, including Dirichlet, Neumann, Robin, and any combination of these on all boundaries.
The boundary tau corrections can be made hyperoctahedrally symmetric and easily incorporated into existing solvers.
We present the method explicitly for the Poisson equation in two and three dimensions and describe its extension to arbitrary elliptic equations (e.g. biharmonic) in any dimension.
\end{abstract}

\begin{keywords}
spectral methods, tau method, corner conditions, Chebyshev polynomials
\end{keywords}

\begin{MSCcodes}
65N12, 65N35, 65N55
\end{MSCcodes}

\section{Introduction}

Global spectral methods are a powerful technique for solving parabolic and elliptic partial differential equations (PDEs) in simple domains \cite{trefethen2000spectral,Boyd.2001}.
For periodic domains, Fourier spectral methods produce diagonal discretizations of constant-coefficient linear differential operators and provide exponentially converging approximations to smooth solutions.
For bounded dimensions, orthogonal polynomials (such as Chebyshev polynomials) have similar convergence properties and can produce banded discretizations for linear differential operators with properly chosen test and trial bases \cite{Boyd.2001}.

Domains that are periodic in all but one dimension can use direct products of Fourier series and a single polynomial basis.
PDEs that are translationally invariant along the periodic dimensions will have discretizations that linearly separate across Fourier modes.
The resulting decoupled polynomial systems can be solved via collocation \cite{trefethen2000spectral} or coefficient-based methods, including spectral integration and Petrov-Galerkin schemes (such as the ultraspherical method \cite{Olver.2013}).
Boundary conditions are easily implemented at the two boundaries, and the production of fast solvers for generic equations can be robustly automated \cite{Burns.2020}.
These methods have a long history of application in the physical sciences, particularly to fundamental problems in fluid dynamics in ``channel'' geometries, such as the Orr-Somerfeld problem \cite{Orszag.19714ni}, transitions to turbulence \cite{Orszag.1980hv,Moin.1980,Trefethen.1993,GIBSON.2009}, and Rayleigh-Benard convection \cite{Tuckerman.1989,Julien.1996}.

For domains with more than one bounded dimension, global spectral methods become substantially more difficult to implement.
One fundamental issue is enforcing general boundary conditions in such a way that they are consistent with each other at shared edges and points.
In this work, we will consider problems on squares and cubes.
For the square, we will use edge-aligned coordinates $(x,y)$, and label the edges as north ($N$), east ($E$), south ($S$), and west ($W$), as illustrated in \cref{fig.domains}~(left).
For the cube, we will use face-aligned coordinates $(x,y,z)$, label the $x$ and $y$ faces as in the square, and label the $z$ faces as top ($T$) and bottom ($B$), as illustrated in \cref{fig.domains}~(center).

\begin{figure}
\centering
\includestandalone[height=1.75in]{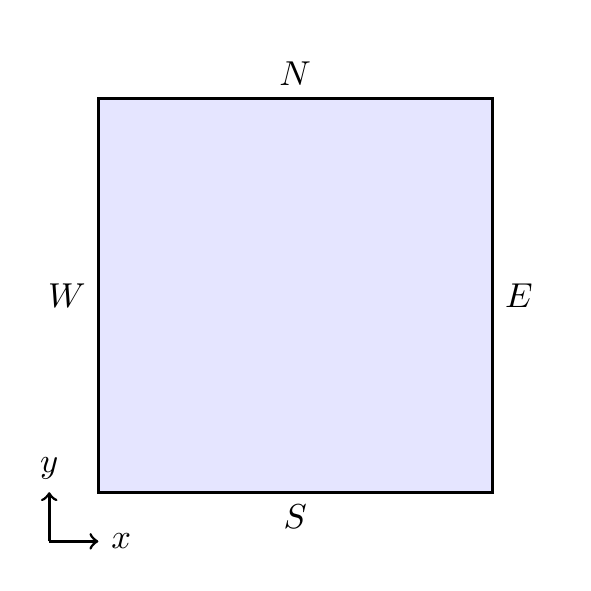}
\hspace{-2em}
\raisebox{0.3em}{\includestandalone[height=1.45in]{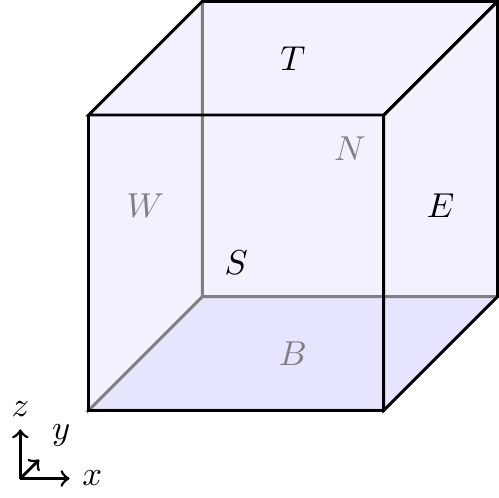}}
\hspace{-0.5em}
\includestandalone[height=1.75in]{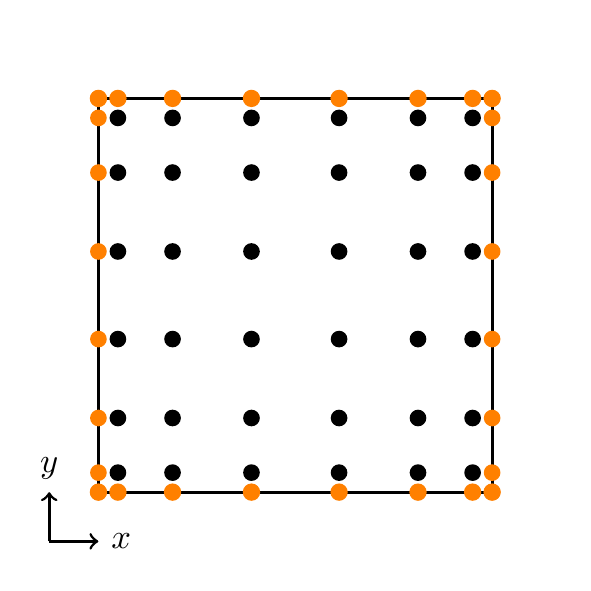}
\caption{
    Left: We consider problems in the square with coordinates $(x,y)$ and edges labeled north~($N$), east~($E$), south~($S$), and west~($W$).
    Center: We consider problems in the cube with coordinates $(x,y,z)$ and faces labeled north~($N$), east~($E$), south~($S$), west~($W$), top~($T$), and bottom~($B$).
    Right: Standard type-II (extrema) collocation discretizations on the square contain  $(N_x-2)(N_y-2)$ interior nodes (black) and $2 N_x + 2 N_y - 4$ boundary nodes (orange); all other spectral formulations of second-order elliptic problems require the same number of interior and boundary constraints.}
\label{fig.domains}
\end{figure}

For Dirichlet boundary conditions, continuity at corners and edges is readily achieved by enforcing the boundary conditions via collocation on type-II (extrema grids, which place nodes directly at the domain edges and corners (\cref{fig.domains}~(right)).
For a size $N_x \times N_y$ discretization of the square, there are $(N_x-2)(N_y-2)$ interior nodes and $2 N_x + 2 N_y - 4$ boundary nodes.
We will see that these numbers correspond to the independent interior and boundary degrees of freedom for all bivariate spectral methods for second-order elliptic PDEs on the square, but they are not always as easily partitioned as in the type-II collocation case.

Standard collocation schemes simply impose the interior PDE at the interior nodes and impose the Dirichlet conditions at the boundary nodes.
This is a form of row replacement, where the PDE residual conditions on the boundary are replaced by the boundary conditions.
No consideration of consistency conditions between adjoining boundary conditions is required, as a single Dirichlet value is enforced directly at each corner (e.g. \cite{Dang-Vu.1993}).
Of course, if the original problem is posed with Dirichlet conditions that are inconsistent at the corners (e.g.\ $u(x=E,y \to N) \neq u(x \to E, y=N)$), then some Dirichlet value must be chosen for the corner nodes (e.g.\ the average of these limits), but the linear system with this approach is nonsingular.

For other boundary conditions or methods, however, specifying boundary conditions in a nonsingular and well-conditioned way is more difficult.
For instance, what condition should be applied at the corner node if the adjacent boundaries have Neumann conditions, or a mix of different types of conditions?
For Galerkin methods\footnote{Here we use the term ``Galerkin'' to refer to any method using orthogonal polynomials of varying degrees as a basis, as opposed to collocation methods using Lagrange polynomials as a basis. We do not specifically refer to bases that \emph{a-priori} satisfy the boundary conditions, as the term is sometimes used.}, where no variables directly correspond to the solution values at the corners, how should boundary conditions be applied and how are the corners handled?

In these cases, boundary condition enforcement has tended to be ad-hoc and often fails to respect the dihedral symmetry of the square.
For instance, in collocation schemes with Neumann conditions, one may choose to enforce the conditions coming from just one of the incoming edges at each corner \cite{Gillman.2015}.
For Galerkin methods, the degenerate boundary conditions may all be imposed and solved via least squares \cite{Fortunato.2021}, subsets of boundary conditions may also be chosen for each edge \cite{Haidvogel.1979xt, Haldenwang.1984, Townsend.2015yvf}, or recombined bases satisfying the boundary conditions (sometimes called Galerkin bases) can be manually constructed \cite{Awan.1993,Julien.2009,Fortunato.2019}.
These approaches all provide valid and accurate solutions, but can be difficult to automate for high-order equations and mixed boundary conditions.

In this paper, we present a generalized tau scheme for incorporating general commuting boundary conditions for the Poisson equation in the square and cube.
The method adds ``tau terms'' to the boundary conditions as well as the PDE and uses additional constraints on shared edges and corners to ensure the boundary conditions are mutually compatible and properly close the tau-modified PDE.
Our scheme works with many different tau polynomials, including those corresponding to classical collocation and classical tau schemes.
The method extends to polyharmonic equations of arbitrary order in hypercubes of any dimension.
It also satisfies hyperoctahedral symmetries, which may aid the implementation of sparse spectral element schemes using these techniques within each element \cite{Fortunato.2021}.

We begin with a discussion of the generalized tau method in one dimension in \cref{sec.tau}.
We then describe the extension of this method to two dimensions for the Poisson equation in \cref{sec.2d} and develop dihedrally symmetric tau modifications for general commuting boundary conditions.
We further extend this approach to the Poisson equation in 3D in \cref{sec.3d}, where compatibility considerations are required on both the edges and corners of the cube.
In \cref{sec.gen}, we generalize this procedure to arbitrary-order elliptic operators in arbitrary-dimensional hypercubes.
Finally we conclude in \cref{sec.examples} with a variety of example problems including the Poisson and biharmonic equations in 2D and 3D.

\section{The generalized tau method}
\label{sec.tau}

We begin by reviewing the generalized tau method in 1D, which provides a mathematical framework for analyzing different polynomial spectral schemes by examining the residual they add to the PDE in order to accommodate boundary conditions.
Consider a 1D linear boundary value problem $\mathcal{L} u(x) = f(x)$ on $x \in [-1, 1]$, where $\mathcal{L}$ is a constant-coefficient order-$b$ elliptic operator.
The system is closed with $b$-many boundary conditions $\mathcal{B} u(x) = g$, where $\mathcal{B}$ is a column vector of $b$-many linear functionals.

Any weighted-residual spectral method with trial functions $\{\phi_i(x)\}$ and test functions $\{\psi_i(x)\}$ can be applied to this problem, resulting in a square discrete system after truncation at $N$ modes:
\begin{equation}
    \underbrace{\langle \psi_i | \mathcal{L} \phi_j \rangle}_{L_{ij}} \underbrace{\langle \phi_j | u \rangle}_{u^\phi_j} = \underbrace{\langle \psi_i | f \rangle}_{f^\psi_i}, \quad i, j = 0 .. N-1,
\end{equation}
where each inner product is that under which the left/bra functions are orthonormal.
Standard ``boundary-bordering'' or ``row-replacement'' schemes remove $b$-many of these constraints and replace them with the boundary conditions, resulting in a new square system of size $N$.
The tau method, by contrast, augments the original PDE with \emph{tau terms} containing $b$-many undetermined \emph{tau variables} $\{\tau_k \in \mathbb{R}\}$ multiplied by specified \emph{tau polynomials} $\{P_k(x)\}$ as:
\begin{equation}
    \mathcal{L} u(x) + \tau(x) = f(x), \quad \tau(x) = \sum_{k=1}^b \tau_k P_k(x).
\end{equation}
The boundary conditions are then applied alongside the weighted-residual discretization of this tau-modified system, producing a square system of size $N+b$ for the coefficients of $u$ as well as the unknown tau variables.
When the resulting system is solvable, the following important consequences hold:
\begin{itemize}
    \item The generalized tau method can be understood as modifying the original problem so that it has an exact finite-degree polynomial solution.
    \item The choice of the tau polynomials alone determines the exact polynomial solution of the modified equations.
    This same solution can then be found using any (polynomial) test and trial bases to discretize the perturbed system.
    \item The weighted-residual discretization of the modified system, with the addition of the boundary conditions, will be square and of size $N + b$.
\end{itemize}

To examine when these tau modifications produce a solvable system, we introduce the following notation.
We denote $\poly{N}$ as the vector space of real-valued polynomials with degree less than $N$ over a single real variable.
Note we do not include degree-$N$ polynomials in $\poly{N}$ merely for simplicity so that $\dim(\poly{N}) = N$ rather than $N+1$, which will simplify the following accounting.
We denote $\polyq{M}{N}$ as the quotient space $\poly{N} / \poly{M}$ with the equivalence relation $P \sim Q$ on $\polyq{M}{N}$ if $P,Q \in \poly{N}$ and $P-Q \in \poly{M}$.
For constant-coefficient linear differential equations in 1D, we can prove sufficient conditions for the tau-modified equations to be solvable.
We build towards that case by first considering a few simpler examples:

\begin{lemma}
\label{lemma.straight_derivatives}
Consider the differential equation $\partial^b u = f$ along with $b$-many linear boundary conditions $\mathcal{B} u = g$.
When $f \in \poly{N-b}$, the equation has a unique solution $u \in \poly{N}$ if the matrix $\mathcal{B} \begin{bmatrix} x^0 & ... & x^{b-1} \end{bmatrix}$ is full rank.
\end{lemma}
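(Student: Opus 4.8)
The plan is to treat this as a standard decomposition of the solution set into a particular solution plus the kernel of the differential operator, and then reduce the boundary conditions to a finite square linear system. First I would regard $\partial^b$ as a linear map $\poly{N} \to \poly{N-b}$: differentiating $b$ times lowers degree by exactly $b$, so a polynomial of degree less than $N$ maps to one of degree less than $N-b$, making the map well-defined between these spaces.

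The first key step is to identify the kernel. Since $\partial^b x^k = 0$ precisely when $k < b$, and $\partial^b x^k$ is a nonzero polynomial of degree $k-b$ otherwise, the kernel of $\partial^b$ restricted to $\poly{N}$ is exactly $\poly{b} = \mathrm{span}\{x^0, \dots, x^{b-1}\}$, which has dimension $b$. By rank-nullity the image then has dimension $N - b = \dim(\poly{N-b})$, so $\partial^b \colon \poly{N} \to \poly{N-b}$ is surjective. Consequently, for the given $f \in \poly{N-b}$ there exists at least one particular solution $u_0 \in \poly{N}$, and the full solution set of $\partial^b u = f$ inside $\poly{N}$ is the affine subspace $u_0 + \poly{b}$.

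The second key step is to impose the boundary conditions on this affine family. Writing a general solution as $u = u_0 + \sum_{k=0}^{b-1} c_k x^k$ and using linearity of the functionals comprising $\mathcal{B}$, the requirement $\mathcal{B} u = g$ becomes
\begin{equation}
    \mathcal{B} \begin{bmatrix} x^0 & \cdots & x^{b-1} \end{bmatrix} \begin{bmatrix} c_0 \\ \vdots \\ c_{b-1} \end{bmatrix} = g - \mathcal{B} u_0,
\end{equation}
a square $b \times b$ system for the kernel coefficients $c_k$. This system has a unique solution for every right-hand side exactly when the coefficient matrix $\mathcal{B} \begin{bmatrix} x^0 & \cdots & x^{b-1} \end{bmatrix}$ is full rank, which is precisely the stated hypothesis. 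Uniqueness of the $c_k$, combined with the fact that $u_0$ is determined modulo the kernel, then yields a unique $u \in \poly{N}$.

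There is no genuinely hard step here; the argument is elementary once the spaces are set up correctly. The only place that requires care is the bookkeeping of polynomial degrees so that the domain, codomain, and kernel dimensions line up exactly — in particular, respecting the convention that $\poly{N}$ excludes degree-$N$ polynomials so that $\dim(\poly{N}) = N$. This is what makes the rank-nullity count close precisely and \emph{forces} surjectivity of $\partial^b$ rather than merely bounding the dimension of its image, which is the fact that guarantees a particular solution $u_0$ exists in the first place.
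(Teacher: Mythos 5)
Your proof is correct and follows essentially the same route as the paper: both decompose the solution into a particular solution plus an element of the kernel $\mathrm{span}\{x^0,\dots,x^{b-1}\}$, and both reduce the boundary conditions to the same $b \times b$ system with matrix $\mathcal{B} \begin{bmatrix} x^0 & \cdots & x^{b-1} \end{bmatrix}$, whose full rank gives unique solvability. The only difference is cosmetic: the paper exhibits the particular solution explicitly as the unique antiderivative of the form $x^b \tilde{f}$ with $\tilde{f} \in \poly{N-b}$, whereas you obtain its existence abstractly from rank-nullity applied to $\partial^b \colon \poly{N} \to \poly{N-b}$.
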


\begin{proof}
There is a unique order-$b$ antiderivative of $f$ with the form $x^b \tilde{f}$ with $\tilde{f} \in \poly{N-b}$.
Let $u = x^b \tilde{f} - \sum_{k=0}^{b-1} \alpha_k x^k$.
This function satisfies the differential equation by construction.
It satisfies the boundary conditions if $\mathcal{B} \begin{bmatrix} x^0 & ... & x^{b-1} \end{bmatrix} \begin{bmatrix} \alpha_0 & ... & \alpha_{b-1} \end{bmatrix}^T = \mathcal{B} x^b \tilde{f} - g$.
A unique satisfactory set of $\{\alpha_k\}$ coefficients can therefore be found for any $f$ and $g$ if the matrix $\mathcal{B} \begin{bmatrix} x^0 & ... & x^{b-1} \end{bmatrix}$ is full rank.
Uniqueness of $u$ follows since no polynomials with degree $\geq b$ are annihilated by the operator.
\end{proof}

\begin{lemma}
\label{lemma.diff_same_order}
Consider the differential equation $\partial^b u = f$ along with $b$-many linear boundary conditions $\mathcal{B} u = g$.
When $f \in \poly{N}$, the tau-modified equation $\partial^b u + \sum_{k=1}^b \tau_k P_k = f$ has a unique solution $u \in \poly{N}$ if the matrix $\mathcal{B} \begin{bmatrix} x^0 & ... & x^{b-1} \end{bmatrix}$ is full rank and if the tau polynomials $\{P_k\}$ span $\polyq{N-b}{N}$.
\end{lemma}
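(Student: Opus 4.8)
The plan is to reduce this lemma to \cref{lemma.straight_derivatives} by using the tau terms to absorb precisely the high-degree part of the forcing that would otherwise push the antiderivative out of $\poly{N}$. The pivotal observation is a dimension count: since $\dim(\poly{N}) = N$ and $\dim(\poly{N-b}) = N-b$, the quotient $\polyq{N-b}{N}$ has dimension exactly $b$. Hence the $b$-many tau polynomials $\{P_k\}$, whose classes are assumed to span this $b$-dimensional quotient, must in fact form a basis of it.

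First I would introduce the canonical projection $\pi : \poly{N} \to \polyq{N-b}{N}$, whose kernel is exactly $\poly{N-b}$. Because $\{\pi(P_k)\}$ is a basis, the class $\pi(f)$ admits a unique expansion $\pi(f) = \sum_{k=1}^b \tau_k\, \pi(P_k)$, and this determines the tau variables uniquely. With this choice the modified forcing $\tilde f := f - \sum_k \tau_k P_k$ lies in $\ker \pi = \poly{N-b}$, so the tau-modified equation becomes $\partial^b u = \tilde f$ with $\tilde f \in \poly{N-b}$. This is now precisely the hypothesis of \cref{lemma.straight_derivatives}, which, together with the assumed full rank of $\mathcal{B} \begin{bmatrix} x^0 & ... & x^{b-1} \end{bmatrix}$, supplies a unique $u \in \poly{N}$ satisfying both the reduced equation and the boundary conditions.

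Finally I would establish uniqueness of the full pair $(u, \{\tau_k\})$ by a difference argument. Given two solutions, subtracting yields $\partial^b(u_1 - u_2) + \sum_k (\tau_k^{(1)} - \tau_k^{(2)}) P_k = 0$ together with $\mathcal{B}(u_1 - u_2) = 0$. Since $u_1 - u_2 \in \poly{N}$ implies $\partial^b(u_1 - u_2) \in \poly{N-b} = \ker \pi$, applying $\pi$ forces $\sum_k (\tau_k^{(1)} - \tau_k^{(2)})\, \pi(P_k) = 0$; linear independence of the basis $\{\pi(P_k)\}$ then gives equality of all the tau variables, after which the uniqueness clause of \cref{lemma.straight_derivatives} (homogeneous case) gives $u_1 = u_2$.

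The main obstacle is really the only conceptually subtle point: recognizing that the word ``span,'' combined with the exact dimension match $\dim \polyq{N-b}{N} = b$, upgrades the hypothesis to ``basis.'' This upgrade is what makes the selection of the tau variables simultaneously \emph{possible} (the class $\pi(f)$ is expressible) and \emph{unique} (the coefficients are forced); everything downstream is bookkeeping and a direct appeal to the previous lemma.
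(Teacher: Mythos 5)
Your proposal is correct and follows essentially the same route as the paper's own proof: use the spanning hypothesis to determine the tau variables uniquely so that $f - \sum_k \tau_k P_k$ lands in $\poly{N-b}$, then invoke \cref{lemma.straight_derivatives}. The paper's version is terser---it asserts the unique choice of taus and the reduction in two sentences---while you make explicit the supporting details (the dimension count upgrading ``span'' to ``basis,'' the projection $\pi$, and the difference argument for uniqueness of the pair), all of which are implicit in the paper's argument.
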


\begin{proof}
If the tau polynomials span $\polyq{N-b}{N}$, then there is a unique set of tau coefficients $\{\tau_k\}$ such that $f - \sum_{k=1}^b \tau_k P_k \equiv \hat{f} \in \poly{N-b}$.
The result then follows using Lemma \ref{lemma.straight_derivatives} for the equation $\partial^b u = \hat{f}$.
\end{proof}

\begin{lemma} 
Consider a constant-coefficient linear differential equation $L(\partial) u = f$, where $L$ is a degree-$b$ polynomial with $L(0) \neq 0$, along with $b$-many linear boundary conditions $\mathcal{B} u = g$.
When $f \in \poly{N}$, the tau-modified equation $L(\partial) u + \sum_{k=1}^b \tau_k P_k = f$ has a unique solution $u \in \poly{N}$ if $P_k \in \poly{N}$ and the matrix $\mathcal{B} \begin{bmatrix} p_1 & ... & p_b \end{bmatrix}$ is full rank, where $L(\partial) p_k = P_k$.
\end{lemma}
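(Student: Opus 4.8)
The plan is to reduce the tau-modified problem to an invertible square system by exploiting the fact that the hypothesis $L(0)\neq 0$ makes $L(\partial)$ a bijection on $\poly{N}$. First I would establish this bijectivity. Writing $L(\partial)=\sum_{j=0}^{b}c_j\partial^j$ with $c_0=L(0)\neq 0$, I observe that if $u\in\poly{N}$ has degree $d$, then each term $c_j\partial^j u$ with $j\geq 1$ has degree strictly less than $d$ (or vanishes), while $c_0 u$ has degree exactly $d$ with nonzero leading coefficient. Hence $L(\partial)u$ has degree exactly $d$, so $L(\partial)$ is degree-preserving and therefore injective on $\poly{N}$, and since $\dim\poly{N}=N$ is finite, injectivity upgrades to bijectivity $L(\partial)\colon\poly{N}\to\poly{N}$. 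This simultaneously guarantees that each $P_k\in\poly{N}$ has a unique preimage $p_k\in\poly{N}$ with $L(\partial)p_k=P_k$, so the $p_k$ in the statement are well defined.

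Next I would absorb the tau terms by a change of variables. Setting $v=u+\sum_{k=1}^{b}\tau_k p_k$ and applying $L(\partial)$, the relations $L(\partial)p_k=P_k$ collapse the tau-modified equation into $L(\partial)v=f$. Because $f\in\poly{N}$ and $L(\partial)$ is bijective on $\poly{N}$, there is a unique $v\in\poly{N}$ solving this, and crucially $v$ is determined independently of the still-unknown tau variables. The original unknown is then recovered as $u=v-\sum_k\tau_k p_k$.

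Finally I would impose the boundary conditions to fix the tau variables. Substituting into $\mathcal{B}u=g$ gives the square $b\times b$ system $\mathcal{B}\begin{bmatrix}p_1 & \cdots & p_b\end{bmatrix}\begin{bmatrix}\tau_1 & \cdots & \tau_b\end{bmatrix}^T=\mathcal{B}v-g$. When $\mathcal{B}\begin{bmatrix}p_1 & \cdots & p_b\end{bmatrix}$ is full rank it is invertible, yielding a unique $\{\tau_k\}$ and hence a unique $u\in\poly{N}$; this last step mirrors the linear-algebra argument already used in Lemmas~\ref{lemma.straight_derivatives} and~\ref{lemma.diff_same_order}.

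I expect the key insight, rather than a genuine obstacle, to be recognizing that $L(0)\neq 0$ is precisely what removes the kernel of the differential operator and thereby makes it invertible on $\poly{N}$. In the pure $\partial^b$ case treated earlier, the operator annihilates all polynomials of degree below $b$, which is exactly why Lemma~\ref{lemma.diff_same_order} needed the $P_k$ to span the quotient space $\polyq{N-b}{N}$. Here the absence of a kernel lets the change of variables carry the entire argument, so no spanning condition is required and the single full-rank boundary matrix does all the remaining work; everything beyond the bijectivity observation is routine.
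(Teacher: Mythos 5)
Your proof is correct and takes essentially the same approach as the paper's: both invert $L(\partial)$ on $\poly{N}$ (the paper simply asserts that $L(0)\neq 0$ makes the operator uniquely invertible and degree-preserving, where you derive this from degree preservation plus finite dimensionality), write the solution as $u_f - \sum_{k} \tau_k p_k$ with $L(\partial)u_f = f$, and determine the tau variables uniquely from the full-rank $b\times b$ system $\mathcal{B}\begin{bmatrix} p_1 & \cdots & p_b \end{bmatrix}\begin{bmatrix}\tau_1 & \cdots & \tau_b\end{bmatrix}^T = \mathcal{B}u_f - g$. Your change of variables $v = u + \sum_k \tau_k p_k$ is just the paper's construction read in reverse, and your explicit bijectivity argument is a welcome addition to the paper's terser claim.
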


\begin{proof}
Since $L(0) \neq 0$, the differential operator is uniquely invertible over polynomials and maintains degree.
We can then define $u_f = L(\partial)^{-1} f \in \poly{N}$ and $p_k = L(\partial)^{-1} P_k \in \poly{N}$.
Let $u = u_f - \sum_{k=1}^b \tau_k p_k \in \poly{N}$.
This function satisfies the tau-modified differential equation by design.
It satisfies the boundary conditions if $\mathcal{B} \begin{bmatrix} p_1 & ... & p_b \end{bmatrix} \begin{bmatrix} \tau_1 & ... & \tau_b \end{bmatrix}^T = \mathcal{B} u_f - g$.
A unique satisfactory set of tau coefficients can therefore be found for any $f$ and $g$ if the matrix $\mathcal{B} \begin{bmatrix} p_1 & ... & p_b \end{bmatrix}$ is full rank.
Uniqueness of $u$ follows since no polynomials are annihilated by the operator.
\end{proof}

\begin{proposition}
Consider a constant-coefficient linear differential equation $L(\partial) u = f$, where $L$ is a degree-$b$ polynomial, along with $b$-many linear boundary conditions $\mathcal{B} u = g$.
Let $a$ be the degree of the greatest monomial factor $L$, so we can write $L(\partial) = \partial^a \tilde{L}(\partial)$ where $\deg \tilde{L} = b-a$ and $\tilde{L}(0) \neq 0$.
When $f \in \poly{N}$, the tau-modified equation $L(\partial) u + \sum_{k=1}^{a} \tau_k P_k + \sum_{k=a+1}^b \tau_k Q_k = f$ has a unique solution $u \in \poly{N}$ if the tau polynomials $P_k$ span $\polyq{N-a}{N}$, if $Q_k \in \poly{N-a}$, and if the matrix $\mathcal{B} \begin{bmatrix} x^0 & ... & x^{a-1} & q_{a+1} & ... & q_b \end{bmatrix}$ is full rank, where $L(\partial) q_k = Q_k$ and $x^a | \tilde{L}(\partial) q_k$.
\end{proposition}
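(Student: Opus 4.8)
The plan is to reduce this statement to the two special cases already established: the pure-derivative operator $\partial^a$ (Lemmas \ref{lemma.straight_derivatives} and \ref{lemma.diff_same_order}) governs the top $a$ degrees and the kernel, while the invertible factor $\tilde{L}(\partial)$ merely transports polynomials without changing degree. The factorization $L(\partial) = \partial^a \tilde{L}(\partial) = \tilde{L}(\partial) \partial^a$, where the factors commute as polynomials in $\partial$, is what lets me separate these two roles.

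First I would split the tau variables into the two groups suggested by the statement. Since $u \in \poly{N}$ implies $L(\partial) u \in \poly{N-a}$ and each $Q_k \in \poly{N-a}$, the only terms in the modified equation reaching the top quotient $\polyq{N-a}{N}$ are $f$ and $\sum_{k=1}^a \tau_k P_k$. Projecting the equation into $\polyq{N-a}{N}$ therefore gives $\sum_{k=1}^a \tau_k [P_k] = [f]$; because the $a$-many classes $[P_k]$ span the $a$-dimensional space $\polyq{N-a}{N}$, they form a basis and $\tau_1, \dots, \tau_a$ are uniquely and independently fixed, mirroring the reduction in Lemma \ref{lemma.diff_same_order}. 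Subtracting off these terms leaves $\hat{f} = f - \sum_{k=1}^a \tau_k P_k \in \poly{N-a}$ and the reduced problem $L(\partial) u + \sum_{k=a+1}^b \tau_k Q_k = \hat{f}$ together with $\mathcal{B} u = g$.

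Next I would identify the kernel and build particular solutions. The key observation is that $L(\partial) x^j = \tilde{L}(\partial) \partial^a x^j = 0$ for $j = 0, \dots, a-1$, so the monomials $x^0, \dots, x^{a-1}$ are an explicit basis for the $a$-dimensional space $\ker L(\partial)$ — this is why those columns, rather than some transformed version, appear in the rank condition. To invert $L(\partial)$ on $\poly{N-a}$ I would first take the unique $a$-th antiderivative of the form $x^a(\cdot)$ guaranteed by Lemma \ref{lemma.straight_derivatives}, then apply $\tilde{L}(\partial)^{-1}$, which is well defined and degree-preserving since $\tilde{L}(0) \neq 0$; this produces $u_{\hat{f}} = L(\partial)^{-1}\hat{f} \in \poly{N}$, and the same construction yields each $q_k \in \poly{N}$ with $L(\partial) q_k = Q_k$. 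The requirement $x^a \mid \tilde{L}(\partial) q_k$ is precisely the normalization that selects this antiderivative and hence makes $q_k$ unique.

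Finally I would assemble the general solution $u = u_{\hat{f}} - \sum_{k=a+1}^b \tau_k q_k + \sum_{j=0}^{a-1} \alpha_j x^j$, which satisfies the reduced differential equation for any values of the $b-a$ remaining tau variables and the $a$ kernel coefficients $\alpha_j$. Imposing $\mathcal{B} u = g$ gives a square $b \times b$ system whose coefficient matrix is exactly $\mathcal{B}\begin{bmatrix} x^0 & \cdots & x^{a-1} & q_{a+1} & \cdots & q_b \end{bmatrix}$ acting on the unknowns $(\alpha_0, \dots, \alpha_{a-1}, -\tau_{a+1}, \dots, -\tau_b)$, with right-hand side $g - \mathcal{B} u_{\hat{f}}$; full rank of this matrix delivers a unique solution, and uniqueness of $u$ follows because the $\alpha_j$ exhaust all of $\ker L(\partial)$. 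I expect the main obstacle to be the bookkeeping that cleanly separates the two families of tau terms — verifying that the $P_k$ act only in the top quotient while the $Q_k$ and the kernel monomials together supply exactly the $b$ degrees of freedom matched to the $b$ boundary functionals — together with the observation that commutativity of the factorization makes the bare monomials, not $\tilde{L}(\partial)^{-1}x^j$, the correct kernel basis.
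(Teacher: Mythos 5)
Your proof is correct and follows essentially the same route as the paper's: eliminate $\tau_1,\dots,\tau_a$ via the top quotient $\polyq{N-a}{N}$, build the particular solution and the $q_k$ by taking the $x^a$-form antiderivative and inverting the degree-preserving operator $\tilde{L}(\partial)$, parametrize the remaining freedom by the kernel monomials $x^0,\dots,x^{a-1}$ and the taus $\tau_{a+1},\dots,\tau_b$, and close with the full-rank boundary system. Your added observations --- the projection argument forcing the top taus in any solution, and the reading of $x^a \mid \tilde{L}(\partial) q_k$ as the normalization making $q_k$ unique --- are welcome elaborations of the same argument rather than a different approach.
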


\begin{proof}
If the tau polynomials $P_k$ span $\polyq{N-a}{N}$, then there is a unique set of tau coefficients $\{\tau_1, ...., \tau_a\}$ such that $f - \sum_{k=1}^a \tau_k P_k \equiv \hat{f} \in \poly{N-a}$.
There are unique order-$a$ antiderivatives of $\hat{f}$ and $Q_k$ with the form $x^a \tilde{f}$ and $x^a \tilde{Q}_k$ with $\tilde{f},\tilde{Q}_k \in \poly{N-a}$.
Since $\tilde{L}(0) \neq 0$, this operator is uniquely invertible over polynomials and maintaines degree.
We can then define $u_f = \tilde{L}(\partial)^{-1} x^a \tilde{f} \in \poly{N}$ and $q_k = \tilde{L}(\partial)^{-1} x^a \tilde{Q}_k \in \poly{N}$.
Let $u = u_f - \sum_{k=0}^{a-1} \alpha_k x^k - \sum_{k=a+1}^b \tau_k q_k \in \poly{N}$.
This function satisfies the tau-modified differential equation by design.
It satisfies the boundary conditions if $\mathcal{B} \begin{bmatrix} x^0 & ... & x^{a-1} & q_{a+1} & ... & q_b \end{bmatrix} \begin{bmatrix} \alpha_0 & ... & \alpha_{a-1} & \tau_{a+1} & ... & \tau_b \end{bmatrix}^T = \mathcal{B} u_f - g$.
A unique satisfactory set of coefficients can therefore be found for any $f$ and $g$ if the matrix $\mathcal{B} \begin{bmatrix} x^0 & ... & x^{a-1} & q_{a+1} & ... & q_b \end{bmatrix}$ is full rank.
Uniqueness of $u$ follows since no polynomials with degree $\geq a$ are annihilated by the operator.
\end{proof}

Proving sufficient conditions for solvability in higher dimensions or with nonconstant coefficients is more complex.
Here we will assume that a similar condition to Lemma \ref{lemma.diff_same_order} holds for the Poisson equation in arbitrary dimensions:

\begin{conjecture}
\label{conj.poisson_solvability}
Consider the Poisson equation $\Delta u = f$ in the hypercube of dimension $d$ with continuous Dirichlet boundary conditions.
We conjecture that the tau-modified equation $\Delta u + \sum_k \tau_k P_k = f$ has a unique solution $u \in \poly{N}^d$ if $f \in \poly{N}^d$ and the tau polynomials $P_k$ span
\begin{equation}
    \polyq{N-2}{N}^{(d)} = \poly{N}^d / \poly{N-2}^d.
\end{equation}
\end{conjecture}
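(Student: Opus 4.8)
The plan is to follow the template of Lemma~\ref{lemma.diff_same_order}: first reduce to a uniqueness (injectivity) statement by a dimension count, and then verify that the homogeneous system has only the trivial solution. The unknowns are the $N^d$ coefficients of $u$ together with the tau variables; since the $P_k$ span $\polyq{N-2}{N}^{(d)}$, there are $N^d-(N-2)^d$ of them. The continuous Dirichlet data on $\partial[-1,1]^d$ contributes exactly $N^d-(N-2)^d$ independent constraints, because the trace map on $\poly{N}^d$ has kernel $B\,\poly{N-2}^d$, where $B=\prod_{i=1}^d(1-x_i^2)$. Together with the $N^d$ scalar constraints from imposing $\Delta u + \sum_k \tau_k P_k = f$ in $\poly{N}^d$, the system is square, so both existence and uniqueness follow once injectivity is established.

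First I would reduce injectivity to a transversality statement. Suppose $\Delta u + \tau = 0$ with $u|_{\partial}=0$ and $\tau \in V := \operatorname{span}\{P_k\}$. Vanishing on all $2d$ faces forces $u = Bv$ for some $v \in \poly{N-2}^d$, and the equation gives $\Delta u = -\tau \in V$. Hence the homogeneous system is trivial if and only if
\begin{equation}
    \Delta\!\left(B\,\poly{N-2}^d\right) \cap V = \{0\}.
\end{equation}
Because the $\{[P_k]\}$ form a basis of $\polyq{N-2}{N}^{(d)}$, the space $V$ is a complement of $\poly{N-2}^d$ in $\poly{N}^d$ of dimension $N^d-(N-2)^d$, while $\Delta(B\,\poly{N-2}^d)$ has dimension at most $(N-2)^d$; the two dimensions are complementary, so transversality is exactly what remains to prove.

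Next I would record the one ingredient that is genuinely easy. The interior map $v \mapsto \Delta(Bv)$ is injective: using the energy identity $\int_\Omega u\,\Delta u = -\int_\Omega |\nabla u|^2$, valid since $u=Bv$ vanishes on $\partial\Omega$, one sees that $\Delta(Bv)=0$ forces $\nabla u \equiv 0$, and the boundary condition then gives $u=0$. Thus $W := \Delta(B\,\poly{N-2}^d)$ has full dimension $(N-2)^d$, and the whole problem collapses to showing $W\cap V=\{0\}$.

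The main obstacle is precisely this transversality. In one dimension it is automatic: $\partial^2(Bv)$ drops two degrees, so $W \subseteq \poly{N-2}$, and any complement $V$ of $\poly{N-2}$ meets $W$ trivially---this is exactly why Lemma~\ref{lemma.straight_derivatives} closes. In $d\geq 2$ the cross terms $\partial_{x_i}^2(Bv)$ retain degree up to $N-1$ in the variables $x_j$ with $j\neq i$, so $W \not\subseteq \poly{N-2}^d$ and the clean one-dimensional argument breaks down. Since a general complement of $\poly{N-2}^d$ need not be transversal to $W$, and since the energy identity above does not control the pairing of $u$ against an arbitrary $\tau\in V$, establishing $W\cap V=\{0\}$ cannot rest on the dimension count alone. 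I would therefore attempt to prove transversality by exploiting additional structure of the tau polynomials actually constructed in \cref{sec.2d,sec.3d}---in particular their support on the boundary rows and columns and their hyperoctahedral symmetry---rather than treating $V$ as an arbitrary complement. Isolating the minimal structural hypothesis on $\{P_k\}$ that guarantees $W\cap V=\{0\}$ is, I expect, the genuine crux of the conjecture.
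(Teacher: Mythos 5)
You should know at the outset that the paper contains no proof of this statement: it is posed as a conjecture, and the authors say only that it ``seems to be borne out by many numerical examples.'' So the only question is whether your argument proves it, and---as you yourself acknowledge in your final paragraph---it does not. What you do establish is correct and is a genuine clarification: the kernel of the boundary trace on $\poly{N}^d$ is $B\,\poly{N-2}^d$ with $B=\prod_{i=1}^d(1-x_i^2)$; the energy identity makes $v\mapsto\Delta(Bv)$ injective, so $W=\Delta(B\,\poly{N-2}^d)$ has dimension $(N-2)^d$; and since the resulting system is square, the conjecture is equivalent to the transversality $W\cap V=\{0\}$, where $V=\mathrm{span}\{P_k\}$ is the complement of $\poly{N-2}^d$ determined by the tau polynomials. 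In one dimension $W\subseteq\poly{N-2}$, so transversality is automatic against any complement (this is why the paper's 1D lemmas close), but for $d\ge2$ the transversality is the entire content of the conjecture, and your proposal leaves it open.

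The gap, moreover, is not one that more effort along the same lines could fill, because your own reduction shows the conjecture is \emph{false} as literally stated, i.e.\ for an arbitrary spanning set $\{P_k\}$. Take $d=2$, $N\ge3$, $v=x^{N-3}y^{N-3}\in\poly{N-2}^2$, and $w=\Delta(Bv)\in W$. The coefficient of $x^{N-3}y^{N-1}$ in $w$ is $(N-1)(N-2)\neq0$, so $w\notin\poly{N-2}^2$ and $[w]\neq0$ in $\polyq{N-2}{N}^{(2)}$. Extending $[w]$ to a basis of the quotient and lifting gives tau polynomials with $P_1=w$ that span $\polyq{N-2}{N}^{(2)}$, yet $u=Bv\neq0$ together with $\tau_1=-1$, $\tau_{k\ge2}=0$ solves the homogeneous problem with $f=0$ and zero Dirichlet data; uniqueness of $u$ fails, and since the system is square, existence also fails for some data. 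So the ``minimal structural hypothesis'' on the $P_k$ that you defer to future work is not a refinement---it is necessary for the statement to be true at all, and any correct version of the conjecture must restrict the tau polynomials, e.g.\ to the tensor-structured choices actually used in \cref{sec.2d,sec.3d} (products of one or two top-degree univariate polynomials with free lower-degree factors). Proving transversality for those structured choices remains open, both in the paper and in your proposal.
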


\noindent This conjecture seems to be borne out by many numerical examples, and it forms the basis of our approach for other equations and boundary conditions.

All weighted residual spectral methods can be written as a discretization of a generalized tau form of the PDE with a given set of test and trail functions.
The \emph{classical tau method} picks the test and trial functions to be identical $\psi_i = \phi_i$, and chooses the tau polynomials as the $b$-many highest-degree elements of the test function set: $P_k = \phi_{N-k}$ for $k = 1 .. b$.
Only the choice of the tau polynomials, however, affects the resulting polynomial solution.
The choice of test and trial bases affect the form and conditioning of the resulting matrix, but in exact arithmetic do not impact the discrete solution.
This unification makes the generalized tau method particularly powerful for comparing different spectral methods since it distinguishes \textbf{what} approximate problem they solve from \textbf{how} they solve it.
Several other common spectral methods and their tau formulations include:

\begin{itemize}

    \item \emph{Standard collocation}, which takes the test and trial functions to be the Lagrange polynomials on the $N$-point type-II Chebyshev grid.
    For a typical second-order problem, the tau polynomials are taken to be the Lagrange polynomials supported at the endpoint nodes.
    Higher-order problems and integral conditions are handled in an ad-hoc manner, for instance including the Lagrange polynomials on additional near-edge nodes as tau polynomials.
    The resulting matrices are dense.
    
    \item \emph{Rectangular collocation}, which takes the trial basis to be the Lagrange polynomials on the $N$-point type-I Chebyshev grid and the test basis to be the Lagrange polynomials on the $(N-b)$-point type-I grid \cite{Driscoll.2015}.
    If the Chebyshev inner product is used, this is equivalent to the classical Chebyshev tau method with $P_k = T_{N-k}$ for $k=1 .. b$.
    If Gaussian quadrature on the $(N-b)$-point grid is used, then some aliasing errors are incurred.
    In both cases, the resulting matrices are dense.
    
    \item The \emph{ultraspherical method} takes the trial basis to be the ultraspherical polynomials $\{C^{(\alpha)}_i\}$, often specifically the Chebyshev polynomials $T_i = C_i^{(0)}$, and the test basis to be $\{C^{(\alpha+b)}_i\}$.
    This choice results in banded differential operators, enabling fast direct solvers for many systems of equations.
    Several choices of the tau polynomials, however, are still possible:
    
    \begin{itemize}
    
        \item The \emph{standard} or \emph{full-order} ultraspherical method takes the tau polynomials to coincide with the highest terms in the test basis: $P_k = C^{(\alpha+b)}_{N-k}$ for $k=1 .. b$.
        
        \item The \emph{first-order} ultraspherical method reduces the system to first-order form and adds tau terms in the $\{C^{(\alpha+1)}_i\}$ basis.
        The resulting system can be collapsed back into a high-order system before being projected against the test basis, resulting in a mix of tau terms from $\{\{C^{(\alpha+1)}_i\}, ..., \{C^{(\alpha+b)}_i\}\}$.
    
    \end{itemize}
    
\end{itemize}

We note that whenever the tau polynomials coincide with the test bases (including standard collocation and the standard ultraspherical method), the corresponding $b$-many residual constraints can simply be dropped and the solution can be determined from just the remaining residual constraints and the boundary conditions.
In terms of linear algebra, this corresponds to the case where the full system can be block triangularized.
Conversely, any row-replacement scheme can be directly interpreted as a tau method with tau polynomials corresponding to the dropped modes.
In these cases, the tau variables can be determined \emph{a-posteriori} from the corresponding residuals, if desired.
In the general case, however, the tau variables must be solved for simultaneously with the solution coefficients.

While the generalized tau method in one dimension is relatively well understood, and knowingly or unknowingly deployed in many contexts, its extension to multiple dimensions is not commonly referenced in the literature.
As in one dimension, multidimensional spectral schemes can typically be viewed as generalized tau schemes.
Here we explicitly take this approach to understand how general boundary conditions can be incorporated into such schemes, and how to deal with compatibility issues at shared edges and corners on the domain boundaries.

\section{2D Poisson}
\label{sec.2d}

We begin with the 2D Poisson equation on the square $(x,y) \in \Omega = [-1, 1]^2$:
\begin{equation}
    \Delta u(x,y) = f(x,y).
\end{equation}
We first consider the equation with Dirichlet boundary conditions on the edges:
\begin{equation}
    u(e) = f_e, \qquad e \in \delta \Omega = \{N,E,S,W\}.
\end{equation}
The prescribed Dirichlet values do not \emph{a-priori} need to be continuous at the corners $v \in V = \{NE, NW, SE, SW\}$.

We consider generalized tau discretizations of the equation using a direct-product trial basis of the form:
\begin{equation}
    u(x,y) = \sum_{i,j=0}^{N-1} u_{ij} \phi_i(x) \phi_j(y).
\end{equation}
We choose the same discretization size ($N_x = N_y = N$) in both dimensions purely for notational simplicity.
All results easily generalize to anisotropic truncations.
We will see that we will need to add tau terms to both the PDE and the boundary conditions to produce a nonsingular system for the $N^2$ total degrees of freedom in $u$.

\subsection{Interior tau modifications}

The two-dimensional extension of the generalized tau method is to add tau terms to the PDE covering the boundary (for collocation) or highest degree (for a Galerkin method) polynomials in both dimensions.
Specifically, following Conjecture \ref{conj.poisson_solvability} the tau polynomials should span the space
\begin{equation}
    \polyq{N-2}{N}^{(2)} \equiv (\Pi_{N-2,N} \otimes \Pi_{N}) \cup (\Pi_{N} \otimes \Pi_{N-2,N})
\end{equation}
The two component spaces have a non-trivial intersection ($\Pi_{N-2,N} \otimes \Pi_{N-2,N}$), and so we can equivalently write
\begin{equation}
    \polyq{N-2}{N}^{(2)} = (\Pi_{N-2,N} \otimes \Pi_{N-2} ) \oplus (\Pi_{N-2} \otimes \Pi_{N-2,N} ) \oplus (\Pi_{N-2,N} \otimes \Pi_{N-2,N}).
\end{equation}
A typical tau term spanning this space takes the form
\begin{equation}
    \tau(x,y) = \sum_{k=1}^2 \left(\tau_{k}^{(x)}(x) P_k(y) + \tau_{k}^{(y)}(y) P_k(x)\right) + \sum_{k,l=1}^{2} \tau_{kl} P_k(x) P_l(y),
\end{equation}
where $\tau_{k}^{(x)}, \tau_{k}^{(y)} \in \poly{N-2}$, $\tau_{kl} \in \mathbb{R}$, and $\mathrm{span}\{P_k\} = \polyq{N-2}{N}$.
If the tau polynomials are chosen as the last modes of the test basis ($P_k = \psi_{N-k}$), this is equivalent to dropping the last two rows and columns of the equations in spectral space, as illustrated in \cref{fig.constraints_2d}.

\begin{figure}
\centering
\includestandalone[width=0.99\linewidth]{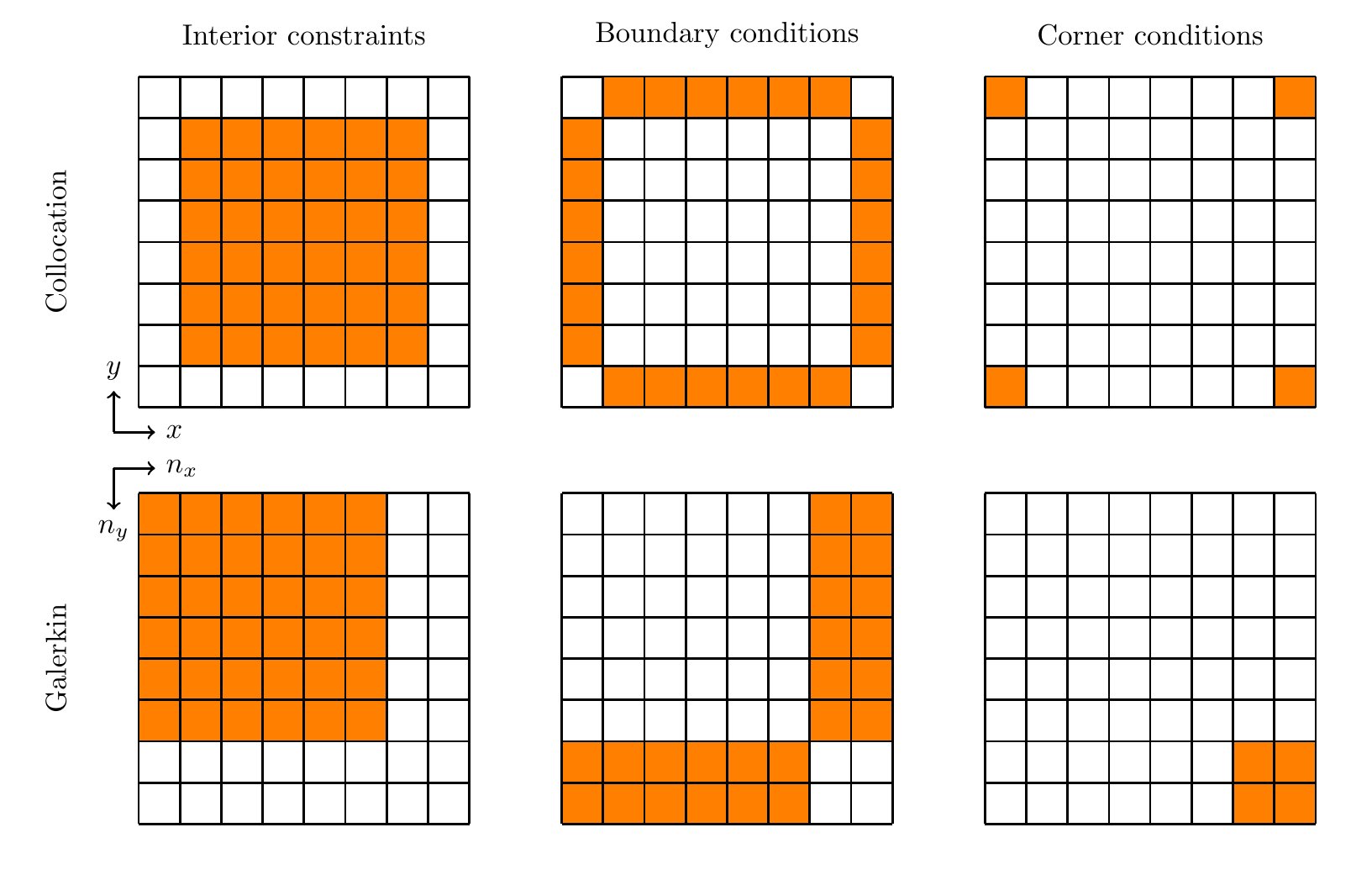}
\caption{
    A pictorial representation of the constraints in collocation (top) and Galerkin (bottom) tau schemes for the Poisson equation in 2D.
    From left to right:
    The interior equations are enforced on the $N-2$ interior nodes / low modes providing $(N-2)^2$ constraints.
    The boundary conditions on each edge are enforced on the $N-2$ interior nodes / low modes providing $4(N-2)$ constraints.
    Conditions on each corner provide the last $4$ constraints.}
\label{fig.constraints_2d}
\end{figure}

These tau terms consist of $4 (N-2) + 4 = 4 N - 4$ independent degrees of freedom, the same as the number of boundary nodes in the type-II collocation scheme.
As in that scheme, $(N-2)^2$ interior constraints on $u$ remain in the tau-modified PDE.
A consistent set of boundary conditions must determine the remaining degrees of freedom in $u$.

\subsection{Boundary tau modifications for Dirichlet data}

With $4N - 4$ interior tau terms, the same number of constraints from boundary conditions must be supplied.
However, the naive application of all boundary conditions to full order consists of $4N$ independent constraints.
Since the underlying spectral representation is continuous at the corners, the specified boundary values must generally also be adjusted to be continuous there.
This requires removing four of the boundary constraints that are independently supported at the corners, or, equivalently, adding additional tau terms to the boundary conditions to absorb any potential discontinuities in the specified boundary data.
Specifically, the boundary conditions are modified to take the form
\begin{equation}
    u(e) + \tau_e = f_e, \qquad e \in \delta \Omega,
\end{equation}
and we now aim to identify forms for each $\tau_e \in \poly{N}$ to render the system solvable.
We first examine how the standard collocation scheme corresponds to such an approach, and then take a similar route to form Galerkin tau schemes for Dirichlet data.

\subsubsection{Collocation modifications as a tau scheme}

In a type-II collocation scheme, there are only $4N - 4$ boundary points to begin with; these schemes force the imposition of continuous boundary data because adjacent edges share the corner collocation nodes.
If initially discontinuous boundary conditions are specified for the PDE, for instance if $f_N = 1$ and $f_W = 0$, then some unique Dirichlet value at the corner $NW$ must be chosen, perhaps taking either incoming value or their average.
Any particular choice will correspond to a choice of tau modifications added to the full boundary conditions.
For example, if the corner condition is set to $u_{NW} = f_{N}(W)$, then the prescribed value of $f_{W}(N)$ is ignored.
This corresponds to adding a tau term to the $W$ boundary condition with the tau polynomial being the Lagrange polynomial supported at the point $NW$, and the corresponding tau variable will be the difference between the imposed and ignored corner values.

Following this intuition, we expect that a general tau scheme should add four tau terms to the boundary conditions.
These terms should adjust the prescribed boundary data to absorb any discontinuities in the corners.

\subsubsection{Singularity of the simplest tau modifications}

On first inspection, it may seem that a reasonable extension of this approach to Galerkin schemes would be to add a single tau polynomial, for instance $\tau_e = \tau_1^{(e)} Q_1$ with $Q_1 = \psi_{N-1}$, to the boundary condition on each edge, as illustrated in \cref{fig.square_tau}~(left).
However, the resulting system is still singular if $Q_1(-1) = \pm Q_1(1)$, as is the case for orthogonal polynomials with an even weight function, because a continuous linear combination of these tau terms can be freely added to the solution.
That is,  the tau variables here are not uniquely determined by requiring continuity at the four corners; 
%$(\tau_T, \tau_R, \tau_B, \tau_L) \to (\tau_T + A, \tau_R + A, \tau_B \pm A, \tau_L \pm A)$ 
there are free adjustments to the tau variables that maintain continuity at all corners (see \cref{fig.square_tau}~(right)).
This issue arises whenever a continuous combination of the chosen tau polynomials can be constructed.

\begin{figure}
\centering
\includestandalone[width=0.31\linewidth]{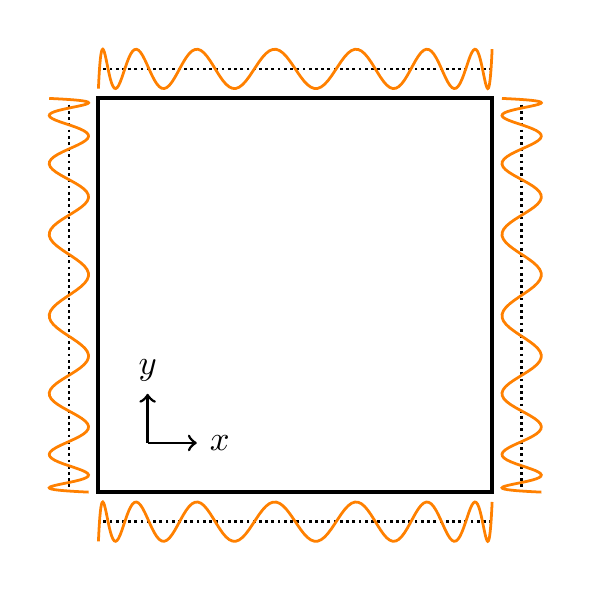}
\includestandalone[width=0.31\linewidth]{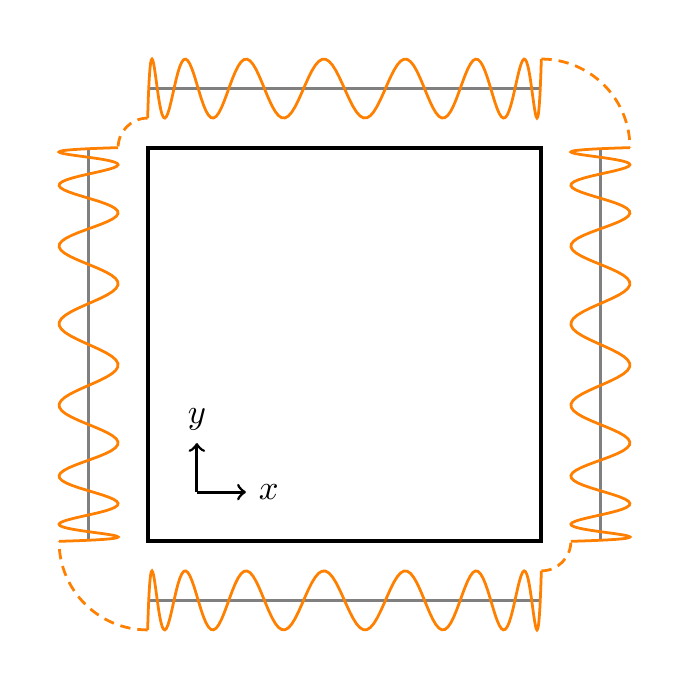}
\caption{Left: A naive approach to removing four constraints from the boundary conditions by placing a single tau polynomial (or dropping the highest mode) on each edge.
    Right: This approach is singular, because there is an unconstrained combination of these modes which is continuous at all four corners.}
\label{fig.square_tau}
\end{figure}

\subsubsection{Nonsingular boundary tau modifications}

Since the simplest Galerkin tau adjustment suffers from the interaction between tau terms on the various edges, an alternative approach is to construct tau terms that can individually absorb discontinuities in the boundary data at each corner without affecting the other corners.
The tau polynomial for each corner should satisfy the following conditions:
\begin{itemize}
    \item It should have a jump in value between the two incoming edges on the corresponding corner.
    \item It should be zero on all edges at all other corners.
\end{itemize}
If these conditions are met, no continuous combinations of the tau polynomials exist.
Each tau variable will be directly determined by the jump in the prescribed boundary data at the corresponding corner.
The boundary data will effectively be adjusted to ensure continuity by subtracting out the resulting tau terms, leaving $4N - 4$ remaining constraints on $u$ from the boundary conditions.

\begin{figure}
\centering
\includestandalone[width=0.31\linewidth]{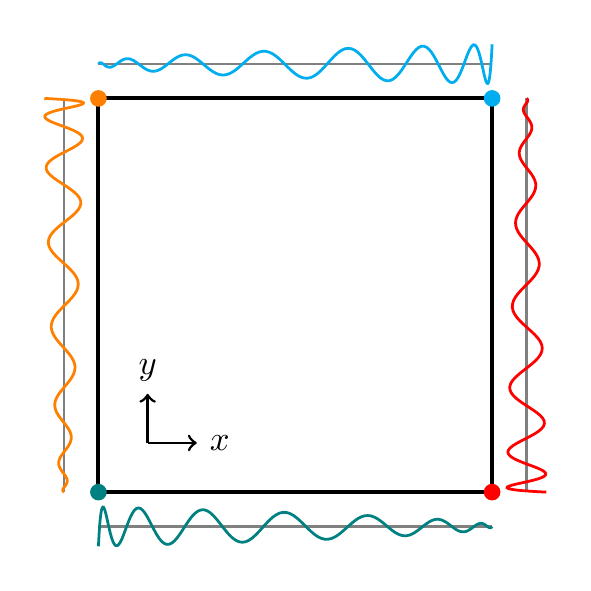}
\hfill
\includestandalone[width=0.31\linewidth]{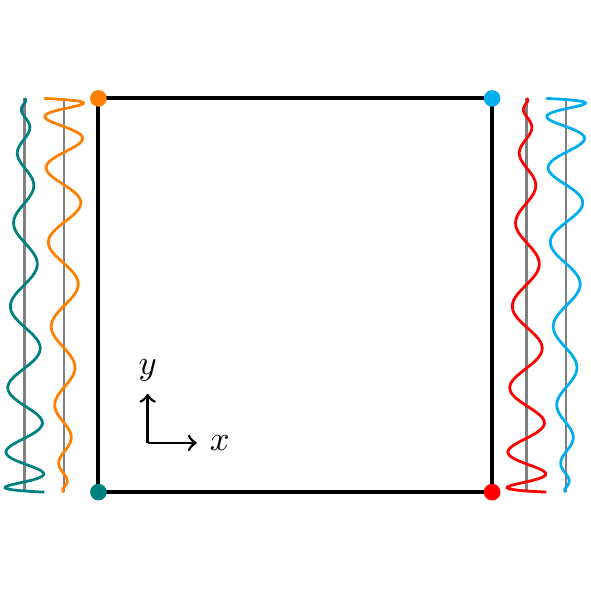}
\hfill
\includestandalone[width=0.31\linewidth]{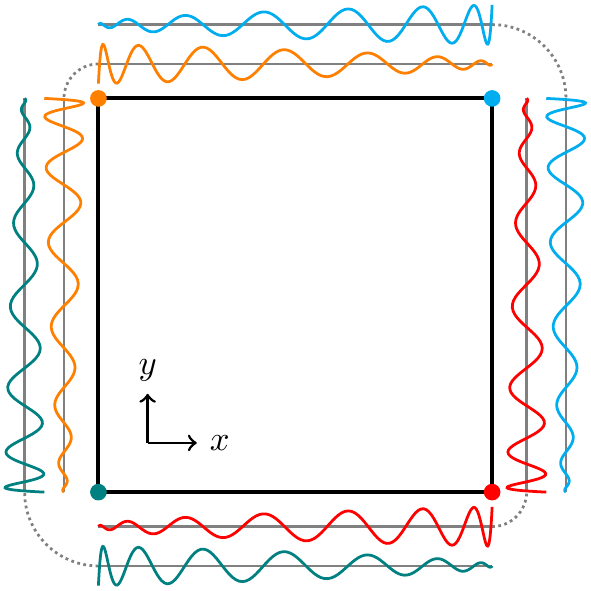}
\caption{Nonsingular tau modifications where each tau polynomial absorbs the discontinuity at a single corner.
    Left: The term on each edge absorbs discontinuities on the adjacent corner in the clockwise direction.
    Center: The north and south conditions are imposed exactly while the tau terms adjust the east and west conditions to match.
    Right: Dihedrally symmetric modifications corresponding to ``sawtooth'' functions at each corner.}
\label{fig.nonsingular_taus}
\end{figure}

Several potential choices for such tau polynomials are shown in \cref{fig.nonsingular_taus}.
Here we have chosen tau polynomials which all correspond to linear combinations of $Q_1 = T_{N-1}$ and $Q_2 = T_{N-2}$ on each edge, but other orthogonal polynomials or the endpoint-supported Lagrange polynomials could easily be substituted.
The left choice contains tau polynomials that are nonzero only on the clockwise corner relative to each edge.
This choice corresponds to picking the Dirichlet values at the corners to be those from the incoming edge data on the clockwise side and adjusting the other incoming edge data to match (as done in \cite{Gillman.2015}).
The center choice contains tau polynomials only on the east and west edges.
This choice corresponds to picking the Dirichlet values at the corners from the north and south edge data, and adjusting the east and west edge data to match (as done in \cite{Julien.2009}).
These choices have direct analogies in collocation schemes and have both found use in the literature, but they fundamentally violate the dihedral symmetry of the square.

A dihedrally symmetric choice is to place odd ``sawtooth''-like modifications on the two edges adjacent to each corner.
For instance, the tau polynomial for $NW$ could include the combination of $Q_1$ and $Q_2$ on $W$ that is 0 on $SW$ and 1 on $NW$, and the combination of $Q_1$ and $Q_2$ on $N$ that is 0 on $NE$ and -1 on $NW$.
% For instance, for definite-parity orthogonal polynomials normalized so that $Q_n(\pm 1) = (\pm 1)^n$, the correction on the north-west corner would include $\tau_{NW} (Q_{N-1}(y) + Q_{N-2}(y))$ on the west edge and $\tau_{NW} (-1)^N (Q_{N-1}(x) - Q_{N-2}(x))$ on the north edge.
% In total, the boundary tau terms would be:
% %
% \begin{equation}
% \begin{aligned}
%     \tau_W &= \tau_{SW} (-1)^N (Q_{N-1} - Q_{N-2}) + \tau_{NW} (Q_{N-1} + Q_{N-2}) \\
%     &= (\tau_{NW} + (-1)^N \tau_{SW}) Q_{N-1} + (\tau_{NW} - (-1)^N \tau_{SW})  Q_{N-2}
% \end{aligned}
% \end{equation}
This choice is shown in \cref{fig.nonsingular_taus}~(right), and corresponds to picking the Dirichlet values at the corners to be equal to the mean of the values from each incoming edge, and adjusting both incoming edges to match this mean.

\subsubsection{Boundary tau modifications via additional constraints}

We note that on each edge, the tau modifications from the adjacent corners consist of two different linear combinations of $Q_1$ and $Q_2$, which together span $\polyq{N-2}{2}$.
A simple approach to constructing this same set of modifications is therefore to add 8 total tau terms to the boundary conditions, one for each $Q$ polynomial on each edge, and include 4 additional constraints to restrict these additions to the desired combinations.
The tau terms on each edge would then be:
\begin{equation}
    \tau_e = \sum_{k=1}^2 \tau_k^{(e)} Q_k \in \polyq{N-2}{N}.
\end{equation}
The problem is closed by specifying 4 additional constraints, namely the corner conditions satisfied by the dihedral tau modifications, which are that the tau terms sum to zero at each corner $v$:
\begin{equation}\label{eq.dirichlet_corner_tau}
    \sum_{e \sim v} \tau_{e}(v) = 0 \qquad \forall v.
\end{equation}
Adding the tau-modified boundary equations for each $e \sim v$ and subtracting the tau constraint above, we see that we can equivalently impose at each corner $v$:
\begin{equation}\label{eq.dirichlet_corner_eqn}
    u(v) = \frac{1}{2} \sum_{e \sim v} f_e \qquad \forall v.
\end{equation}

To summarize, a dihedrally symmetric tau correction to the boundary conditions can be manually constructed using sawtooth-like tau functions.
However, an exactly identical scheme can be implemented by adding two tau polynomials to each edge and imposing additional constraints at each corner.
These constraints can be that the tau terms sum to zero at each corner, or equivalently that the corner values of the solution are the mean of the prescribed values from each incoming edge.
The boundary conditions then provide $4N - 4$ independent constraints on the interior solution, yielding a nonsingular system.

\subsection{Corner conditions for general commuting boundary conditions}

Following the intuition from the dihedral tau modifications for Dirichlet data, we now consider the 2D Poisson equation augmented with general commuting linear boundary conditions at each edge $e$:
\begin{equation}
    \beta_e u = g_e, \qquad e \in \delta \Omega.
\end{equation}
By commuting boundary conditions, we mean that the boundary operators on all adjacent edges $e$ and $e'$ must commute: $\beta_e \beta_{e'} = \beta_e \beta_{e'}$.
For instance, the boundary operators may independently be for each edge:
\begin{itemize}
    \item Dirichlet: $\beta_e = I_e$, where $I_e$ is the interpolation operator to edge $e$,
    \item Neumann: $\beta_e = I_e \partial_{n_e}$, where $\partial_{n_e}$ is the derivative normal to edge $e$,
    \item Robin: $\beta_e = I_e (a + b \partial_{n_e})$).
\end{itemize}

As before, $4N - 4$ interior tau terms are added to the PDE and the same number of constraints must be imposed from the boundary conditions.
We again impose the boundary conditions with two tau terms each as:
\begin{equation}
    \beta_e u + \tau_e = g_e, \qquad \forall e \in \delta \Omega.
\end{equation}
where $\tau_e = \sum_{k=1}^{2} \tau_{k}^{(e)} Q_{k}$ and $\mathrm{span}\{Q_k\} = \polyq{N-2}{N}$.
We have added $4N$ constraints but also $8$ degrees of freedom, so the system remains under-determined by 4 constraints.
Instead of simply imposing that the tau terms sum to zero in the corners as in the Dirichlet case, we must now use a more general approach for developing corner conditions to make the system uniquely solvable.

We can understand the possible sets of corner conditions by first considering the commutation of the boundary operators on adjacent edges.
Applying the complementary boundary operators to the boundary conditions on adjacent edges $e$ and $e'$ and subtracting gives:
\begin{eqnarray}
    \underbrace{(\beta_e \beta_{e'} - \beta_{e'} \beta_e) u}_{0} + \beta_e \tau_{e'} - \beta_{e'} \tau_e = \beta_e g_{e'} - \beta_{e'} g_e
\end{eqnarray}
The first term cancels because the boundary operators commute.
The remainders of these four equations describe how jumps in the tau terms on adjacent edges absorb incompatibilities in the prescribed boundary data.
Note that these are not additional constraints -- they come identically from the tau-modified boundary conditions that are already part of the so-far underdetermined system.

We need to pick four additional corner conditions which, along with these four corner jump conditions, form a solvable system for the 8 degrees of freedom in the boundary tau variables.
%
% For the system to be valid for any prescribed boundary data, the collection of these conditions for all four corners must form a nonsingular system for the tau variables:
% %
% \begin{equation}
%     \begin{bmatrix}
%         \beta_E & -\beta_N & 0 & 0 \\
%         0 & \beta_S & -\beta_E & 0 \\
%         0 & 0 & \beta_W & -\beta_S \\
%         -\beta_W & 0 & 0 & \beta_N \\
%     \end{bmatrix}
%     \begin{bmatrix}
%         \tau_N \\
%         \tau_E \\
%         \tau_S \\
%         \tau_W
%     \end{bmatrix}
%     = 
%     \begin{bmatrix}
%         \beta_E g_N - \beta_N g_E \\
%         \beta_S g_E - \beta_E g_S \\
%         \beta_W g_S - \beta_S g_W \\
%         \beta_N g_W - \beta_W g_N \\
%     \end{bmatrix}
% \end{equation}
% %
% .
%
In general, we will take the corner conditions to be additional linear constraints between the tau terms at each corner like:
\begin{equation}
    \alpha_{ee'} \beta_e \tau_{e'} + \alpha_{e'e} \beta_{e'} \tau_e = 0.
\end{equation}
The various choices for tau modifications analogous to those in \cref{fig.square_tau} can all be written in this form:
\begin{itemize}

    \item The clockwise tau scheme corresponds to taking $\alpha_{NW} = \alpha_{EN} = \alpha_{SE} = \alpha_{WS} = 0$ and $\alpha_{WN} = \alpha_{NE} = \alpha_{ES} = \alpha_{SW} = 1$.
    This results in the constraints $\beta_W \tau_N = \beta_N \tau_E = \beta_E \tau_S = \beta_S \tau_W = 0$.
    In a row-replacement scheme, these four constraints can be implemented without referencing the tau variables by subtracting them from the corresponding cross-evaluations of the boundary conditions, i.e. directly imposing:
    \begin{eqnarray}
        \beta_W \beta_N u & = \beta_W g_N \\
        \beta_N \beta_E u & = \beta_N g_E \\
        \beta_E \beta_S u & = \beta_E g_S \\
        \beta_S \beta_W u & = \beta_S g_W
    \end{eqnarray}
    
    \item The east-west tau scheme corresponds to taking $\alpha_{NE} = \alpha_{SE} = \alpha_{NW} = \alpha_{SW} = 0$ and $\alpha_{EN} = \alpha_{ES} = \alpha_{WN} = \alpha_{WS} = 1$.
    This results in the constraints $\tau_N = \tau_S = 0$ and $\tau_E$ and $\tau_W$ are fully determined by continuity in the corners.
    In a row-replacement scheme, this can be implemented without referencing the tau variables by imposing the top and bottom boundary conditions at the corners, i.e.:
    \begin{align}
        \beta_W \beta_N u & = \beta_W g_N \\
        \beta_E \beta_N u & = \beta_E g_N \\
        \beta_W \beta_S u & = \beta_W g_S \\
        \beta_E \beta_S u & = \beta_E g_S
    \end{align}
    
    \item The dihedral scheme corresponds to taking all $\alpha_{ee'} = 1$.
    In a row-replacement scheme, these conditions can be implemented without referencing the tau variables by adding the complementary boundary conditions at the corners and subtracting the tau constraints, i.e.:
    \begin{equation}\label{eq.isotropic_corners}
        2 \beta_{e} \beta_{e'} u = \beta_e g_{e'} + \beta_{e'} g_e
    \end{equation}
    Note that this expression reduces to \eqref{eq.dirichlet_corner_eqn} for Dirichlet conditions.
    
\end{itemize}

\subsection{The dihedrally symmetric ultraspherical tau method}

In summary, the full set of equations for a dihedrally symmetric tau scheme for the 2D Poisson equation with arbitrary commuting boundary conditions is:
\begin{equation}
\begin{gathered}
    \Delta u(x,y) + \tau(x,y) = f(x,y) \\
    \beta_e u + \tau_e = g_e \qquad \forall e\\
    2 \beta_{e} \beta_{e'} u = \beta_e g_{e'} + \beta_{e'} g_e \qquad \forall v
\end{gathered}
\end{equation}
with $\tau \in \polyq{N-2}{2}^{(2)}$ and $\tau_e \in \polyq{N-2}{2}$.

A dihedral version of the standard ultraspherical spectral method for the square can be constructed by taking:
\begin{itemize}
    \item Chebyshev polynomials as the trial basis: $\phi_i = T_i$,
    \item index-2 ultraspherical polynomials as the interior tau polynomials: $P_k = C^{(2)}_{N-k}$,
    \item Chebyshev polynomials as the boundary tau polynomials: $Q_k = T_{N-k}$.
\end{itemize}
Choosing the test bases the same as the tau polynomials produces a sparse interior operator and allows for $u$ to be solved independently of the tau variables in row-replacement fashion.

\begin{figure}
\centering
\includegraphics[width=0.4\linewidth]{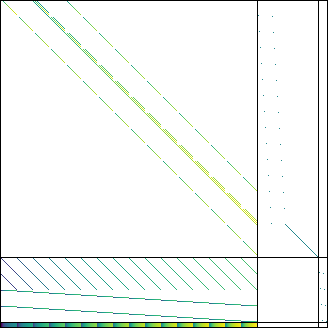}
\hspace{1cm}
\includegraphics[width=0.4\linewidth]{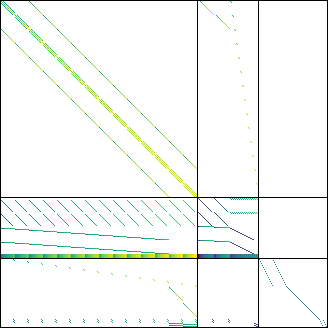}
\caption{Matrices for the 2D Poisson equation with Robin boundary conditions and $N=16$ using the dihedral ultraspherical tau discretization.
Left: the ``natural'' ordering with row blocks corresponding to the interior equation, the boundary conditions, and the corner conditions, and with column blocks corresponding to the solution variables, interior tau variables, and boundary tau variables. 
Right: a permuted system separating the taus and enabling efficient linear solves via the Schur complement.
The row blocks correspond to the low and middle modes of the interior equation, the low and middle modes of the boundary conditions and the corner conditions, and the high modes containing all tau terms.
The column blocks correspond to the middle and high solution modes, the low solution modes, and the interior and boundary tau terms.}
\label{fig.matrix_partitions}
\end{figure}

The full linear system for this discretization of the 2D Poisson equation with Robin boundary conditions and $N=16$ is illustrated in \cref{fig.matrix_partitions}.
The left panel shows the matrix with ``natural'' row and column orderings.
The row blocks are, in order: 
\begin{itemize}
    \item the interior equation: $N^2$ constraints, 
    \item the boundary conditions: $4N$ constraints, 
    \item the corner conditions in the manner of \cref{eq.isotropic_corners}: $4$ constraints.
\end{itemize}
The column blocks are, in order:
\begin{itemize}
    \item the solution $u$: $N^2$ variables,
    \item the interior tau terms: $4N - 4$ variables,
    \item and the boundary tau terms: $8$ variables.
\end{itemize}
Together these form a nonsingular system of size $(N+2)^2$ that can be simultaneously solved for the interior solution and tau variables given any RHS data.

Since the tau polynomials are from the family of test functions, the tau terms can be block-separated from the rest of the system.
The remaining terms can also be partitioned to separate the low-degree modes of the solution from the rest, leading to an efficient Schur-complement based solver for the interior solution.
This permutation is shown in the right panel of \cref{fig.matrix_partitions}.
The row blocks are now in the order:
\begin{itemize}
    \item the low and middle modes ($\max(i,j)<N-2$) of the interior equation: $(N-2)^2 = N^2 - 4N + 4$ constraints,
    \item the low and middle modes of the boundary conditions ($i<N-2$) and the corner conditions: $4(N-2) + 4 = 4N - 4$ constraints,
    \item the high modes of the interior equation ($\max(i,j) \ge N-2$) and boundary conditions ($i \ge N-2$), containing all the tau terms: $4N + 4$ constraints.
\end{itemize}
The column blocks are, in order:
\begin{itemize}
    \item the middle and high solution modes ($\min(i,j)\ge2$): $(N-2)^2 = N^2 - 4N + 4$ variables,
    \item the low solution modes ($\min(i,j)<2$): $4N - 4$ variables,
    \item the interior and boundary tau terms: $4N + 4$ variables.
\end{itemize}
Since the tau terms are block separated from the rest of the system, the solution $u$ can be found by just solving the principal submatrix consisting of the first two block rows and columns, which form a nonsingular square system of size $N^2$.
If desired, the tau variables can then be solved afterwards from the last block row.
This principal submatrix can be solved efficiently by taking the Schur complement of the first block -- the portion of the Laplacian taking the middle and high modes of the input to the low and middle modes of the output.
This block is a square, nonsingular, and well-conditioned matrix of size $(N-2)^2$.
This matrix has tridiagonal-kronecker-tridiagonal form, and can be solved in quasi-optimal time using an ADI scheme \cite{Fortunato.2019}.

\section{3D Poisson}
\label{sec.3d}

The strategy of adding tau terms to the boundary conditions and imposing constraints at their intersections extends nicely to higher dimensions.
We now consider the 3D Poisson equation on the cube $(x,y,z) \in \Omega = [-1,1]^3$:
\begin{equation}
    \Delta u(x,y,z) = f(x,y,z).
\end{equation}
We again consider generalized tau discretizations of the equation using direct-product trial bases of the form:
\begin{equation}
    u(x,y,z) = \sum_{i,j,k=0}^{N-1} u_{ijk} \phi_i(x) \phi_j(y) \phi_k(z),
\end{equation}
which contains $N^3$ degrees of freedom.

\subsection{Interior tau corrections}

Similar to the 2D case, we add tau terms to the PDE which span $\polyq{N-2}{N}^{(3)}$, namely:
\begin{equation}
\begin{aligned}
    \tau(x,y,z) = &\sum_{i=1}^2 \left[\tau_{yz}^{(i)}(y,z) P_i(x) + \tau_{zx}^{(i)}(z,x) P_i(y) + \tau_{xy}^{(i)}(x,y) P_i(z)\right] \\
    + &\sum_{i,j=1}^2 \left[\tau_{z}^{(ij)}(z) P_i(x) P_j(y) + \tau_{x}^{(ij)}(x) P_i(y) P_j(z) + \tau_{y}^{(ij)}(y) P_i(x) P_j(z)\right] \\
    + &\sum_{i,j,k=1}^2 \tau^{(ijk)} P_i(x) P_j(y) P_k(z),
\end{aligned}
\end{equation}
where $\tau_{yz}^{(i)}, \tau_{zx}^{(i)}, \tau_{xy}^{(i)} \in \Pi_{N-2}^2$, $\tau_{x}^{(ij)}, \tau_{y}^{(ij)}, \tau_{z}^{(ij)} \in \Pi_{N-2}$, $\tau^{(ijk)} \in \mathbb{R}$, and $\mathrm{span}\{P_i\} = \Pi_{N-2,N}$.
If the tau polynomials are chosen as the last modes of the test basis ($P_i = \psi_{N-i}$), this is equivalent to dropping the last two planes of the equations in each dimension in spectral space, as illustrated in \cref{fig.constraints_3d}.

\begin{figure}
\centering
\includestandalone[width=0.99\linewidth]{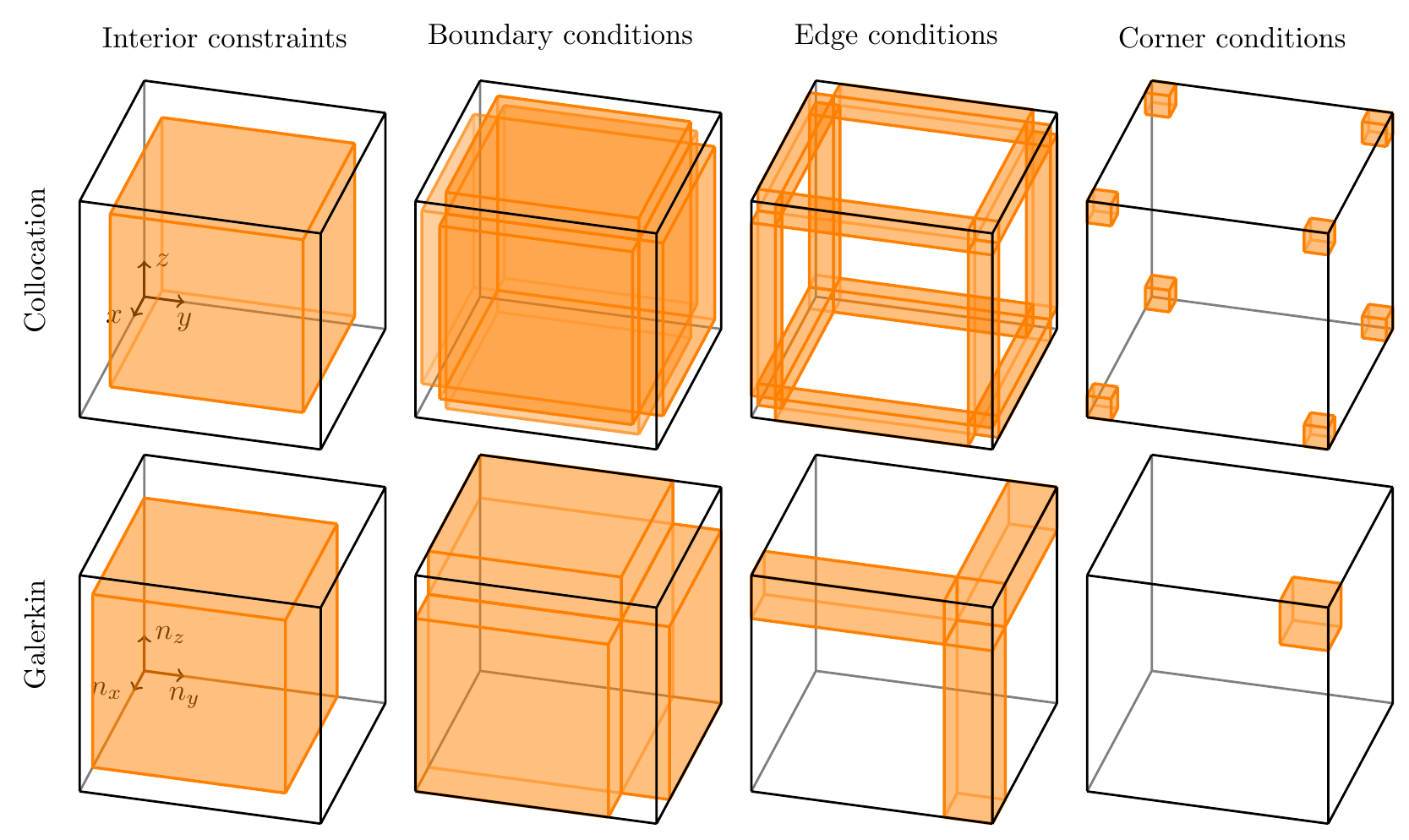}
\caption{A pictorial representation of the constraints in collocation (top) and Galerkin (bottom) tau schemes for the Poisson equation in 3D.
From left to right:
The interior equations are enforced on the $N-2$ interior nodes / low modes providing $(N-2)^3$ constraints.
The boundary conditions on each face are enforced on the $N-2$ interior nodes / low modes providing $6(N-2)^2$ constraints.
Conditions on each edge are enforced on the $N-2$ interior nodes / low modes providing $12(N-2)$ constraints.
Finally, conditions on each corner provide the last $8$ constraints.}
\label{fig.constraints_3d}
\end{figure}

These tau terms consist of $6(N-2)^2 + 12(N-2) + 8$ independent degrees of freedom, the same as the number of boundary nodes in the type-II collocation scheme.
As in that scheme, $(N-2)^3$ interior constraints remain in the tau-modified PDE.
A consistent set of boundary conditions must determine the remaining degrees of freedom in $u$.

\subsection{Corrections for general commuting boundary conditions}

We again consider boundary conditions for each face which commute with the boundary operators on all adjacent faces:
\begin{equation}
    \beta_f u = g_f \qquad \forall f \in \delta \Omega,
\end{equation}
\begin{equation}
    \beta_f \beta_{f'} = \beta_{f'} \beta_f \qquad \forall e.
\end{equation}
As before, we add tau terms to the boundary conditions rather than enforcing them exactly, which allows us to absorb any incompatibilities in the specified RHS data.
Since the boundary conditions are now on faces, the tau terms added to each boundary condition take the form of the interior tau terms in the 2D problem.
That is, we modify the boundary conditions to:
\begin{equation}
    \beta_f u + \tau_f = g_f, \qquad \forall f \in \delta \Omega,
\end{equation}
where $\tau_f \in \polyq{N-2}{N}^{(2)}$.
These tau-modified boundary conditions add $6(N-2)^2$ additional constraints to $u$, leaving $12(N-2) + 8$ remaining degrees of freedom.

In the 2D case, four additional corner conditions were added to determine the boundary tau terms. 
In 3D, we must similarly add conditions on each edge to determine how the boundary conditions on adjacent faces are made consistent.
Naively, a symmetric scheme analogous to \cref{eq.isotropic_corners} could be imposed on each edge, yielding $12N$ additional constraints.
However, this system would then be overdetermined -- fully imposing constraints on the edges $(T,N)$ and $(N,E)$ would transitively imply additional constraints on the $(T,E)$ edge at the $TNE$ corner, which would be degenerate with the constraints on that edge.
Instead, the edge conditions should themselves each have tau terms $\tau_e \in \polyq{N-2}{N}$.
The tau-corrected symmetric edge conditions are therefore:
\begin{equation}
    2 \beta_f \beta_{f'} u + \tau_e = \beta_f g_{f'} + \beta_{f'} g_f \qquad \forall e.
\end{equation}
These tau-modified edge conditions then add $12(N-2)$ constraints on $u$, leaving 8 remaining degrees of freedom.

The final 8 constraints come from imposing conditions on the 8 vertices of the cube.
As with the previous conditions, these can be chosen in multiple fashions, but the octahedrally symmetric choice is to cross-apply the boundary operators from all three adjacent faces and sum, giving:
\begin{equation}
    3 \beta_f \beta_{f'} \beta_{f''} u = \beta_f \beta_{f'} g_{f''} + \beta_{f'} \beta_{f''} g_f + \beta_{f''} \beta_f g_{f'} \qquad \forall v=(f,f',f'').
\end{equation}

To summarize, the interior equations supply $(N-2)^3$ constraints, the boundary conditions on each face supply $6(N-2)^2$ constraints, the edge conditions supply $12(N-2)$ constraints, and the corner conditions supply 8 constraints.
In total, these combine to provide $N^3$ constraints that fully determine $u$.
This system can be applied to any choice of the tau polynomials that properly span the necessary quotient space at each level.
In particular, this method works for both collocation schemes using the boundary-supported Lagrange polynomials and Galerkin schemes like the ultraspherical method (see \cref{fig.constraints_3d}).

\section{Generalization to arbitrary dimensions and elliptic operators}
\label{sec.gen}

From the 2D and 3D cases, we can understand the extension of this system of imposing boundary conditions in any dimension and for any order elliptic operator.
We now consider an order-$b$ elliptic equation in $d$ dimensions on the hypercube $\Omega = [-1, 1]^d$:
\begin{equation}
    \mathcal{L} u(\vec{x}) = f(\vec{x}),
\end{equation}
where $\vec{x} = (x_1,...,x_d)$.
The equation is closed with $b$-many boundary conditions in each dimension:
\begin{equation}
    \beta_{j}^{(i)} u = g_{j}^{(i)}, \qquad i=1..b, \quad j=1..d.
\end{equation}
We consider cases where the boundary operators commute with those from all other dimensions:
\begin{equation}
    \beta_{j}^{(i)} \beta_{j'}^{(i')} = \beta_{j'}^{(i')} \beta_{j}^{(i)} \qquad i,i'=1 .. b, \quad j,j'=1..d, \quad j \neq j'.
\end{equation}

\paragraph{Trial discretization} 
We discretize $u$ using a direct-product trial basis as:
\begin{equation}
    u(\vec{x}) = \sum_{\vec{n} \in C_N^d} u_{\vec{n}} \phi_{\vec{n}}(\vec{x}),
\end{equation}
where $C_N^d = \{(n_1,...,n_d) : 0 \leq n_i < N\}$ and $\phi_{\vec{n}}(\vec{x}) = \prod_{i=1}^d \phi_{n_i}(x_i)$.
This discretization includes $N^d$ degrees of freedom.

\paragraph{Interior taus} 
We begin by adding a tau term to the PDE to allow the incorporation of the boundary conditions:
\begin{equation}
    \mathcal{L} u(\vec{x}) + \tau(\vec{x}) = f,
\end{equation}
where $\tau \in \polyq{N-b}{N}^{(d)}$.
The modified equation provides $(N-b)^d$ constraints on $u$.

\paragraph{Boundary taus} 
Next we add tau terms to the boundary conditions to allow the incorporation of the edge conditions in lower dimensions:
\begin{equation}
    \beta_j^{(i)} u + \tau_j^{(i)} = g_j^{(i)}, \qquad i=1 .. b, \quad j=1 .. d,
\end{equation}
where $\tau_j^{(i)} \in \polyq{N-b}{N}^{(d-1)}$.
Each modified boundary condition provides $(N-b)^{d-1}$ and there are $b d$ many boundary conditions, giving $b d (N-b)^{d-1}$ total constraints on $u$.

\paragraph{Interface conditions} 
To close the boundary conditions, we must impose conditions at the intersections of the faces in every dimension from $d-2$ down to 0.
At each dimension, we must impose conditions on every combination of boundary conditions and every combination of restricted dimensions.
These can be constructed in a hyperoctahedrally symmetric fashion by cross applying the boundary restriction operators to the unmodified boundary conditions and summing over all permutations for each choice of restricted dimensions at each level.
Tau terms are included all the way until dimension 0.
As in the case for 2D Poisson, this system is equivalent to imposing constraints on the tau terms from the previous levels which, along with the constraints from the commutator of the restrictions of the boundary conditions, produce a solvable system for the boundary tau terms.

For instance, at dimension $d-2$, we impose
\begin{equation}
    2 \beta_j^{(i)} \beta_{j'}^{(i')} u + \tau_{j,j'}^{(i,i')} = \beta_j^{(i)} g_{j'}^{(i')} + \beta_{j'}^{(i')} g_j^{(i)} \qquad i,i'=1 .. b, \quad j,j'=1 .. d, \quad j < j',
\end{equation}
where $\tau_{j,j'}^{(i,i')} \in \polyq{N-b}{N}^{(d-2)}$.
Each condition at this level provides $(N-b)^{d-2}$ constraints.
At this level, there are $b^2 \binom{d}{2}$ separate conditions that must be imposed, giving $b^2 \binom{d}{2} (N-b)^{d-2}$ total constraints on $u$.

At dimension $d-3$ we impose the conditions
\begin{equation}
    3 \beta_j^{(i)} \beta_{j'}^{(i')} \beta_{j''}^{(i'')} u + \tau_{j,j',j''}^{(i,i',i'')} = \beta_j^{(i)} \beta_{j'}^{(i')} g_{j''}^{(i'')} + \beta_{j'}^{(i')} \beta_{j''}^{(i'')} g_j^{(i)} + \beta_{j''}^{(i'')} \beta_j^{(i)} g_{j'}^{(i')}
\end{equation}
where $\tau_{j,j',j''}^{(i,i',i'')} \in \polyq{N-b}{N}^{d-3}$.
Each of these conditions provides $(N-b)^{d-3}$ constraints.
At this level, there are $b^3 \binom{d}{3}$ separate conditions that must be imposed, giving $b^3 \binom{d}{3} (N-b)^{d-3}$ total constraints on $u$.

At this point we can recognize the pattern: the interior equation, boundary conditions, and hierarchy of interface conditions provide constraints on $u$ which are enumerated by the binomial theorem.
Namely,
\begin{equation}
    N^d = \sum_{k=0}^d \binom{d}{k} b^k (N-b)^{d-k}.
\end{equation}
The $k=0$ term corresponds to the tau-modified interior PDE which provides $(N-b)^d$ constraints.
The $k=1$ term corresponds to the tau-modified boundary conditions which provide $b d (N-b)^{d-1}$ constraints.
The remaining terms correspond to the tau-modified interface conditions, all the way down to the $k=d$ term which provides $b^d$ conditions on the solution at the vertices.
This system is thus easily automatable once the boundary operators and all bases (test, trial, interior tau, boundary tau) are chosen.

\section{Examples}
\label{sec.examples}

We pick a variety of elliptic test problems in 2D and 3D to illustrate the flexibility of the symmetric formulations of the ultraspherical tau method.
All examples are computed with the Dedalus code \cite{Burns.2020} and the scripts are available on GitHub\footnote{\url{https://github.com/kburns/corner_taus}}.

\subsection{2D Poisson equation with various boundary conditions}

First, we consider the test problem posed in \cite{Fortunato.2019}, namely the 2D Poisson equation on $\Omega = [-1, 1]^2$:
\begin{equation}
    \Delta u(x,y) = f(x,y),
\end{equation}
with homogeneous Dirichlet boundary conditions and the given forcing
\begin{equation}
    f(x,y) = -100 x \sin(20 \pi x^2 y) \cos(4 \pi (x+y)).
\end{equation}

We discretize and solve this equation with the symmetric ultraspherical tau method as described above.
We use Chebyshev polynomials as our trial basis ($\phi_i = T_i$) and index-2 ultraspherical polynomials as our test basis ($\psi_i = C_i^{(2)}$).
We pick interior tau polynomials as the index-2 ultraspherical polynomials ($P_i = C_{N-i}^{(2)}$), the natural basis for the PDE in the ultraspherical method.
We pick the boundary tau polynomials as the Chebyshev polynomials ($Q_i = T_{N-i}$), the natural basis for the boundary conditions.

\begin{figure}
\centering
\includegraphics[width=0.99\linewidth]{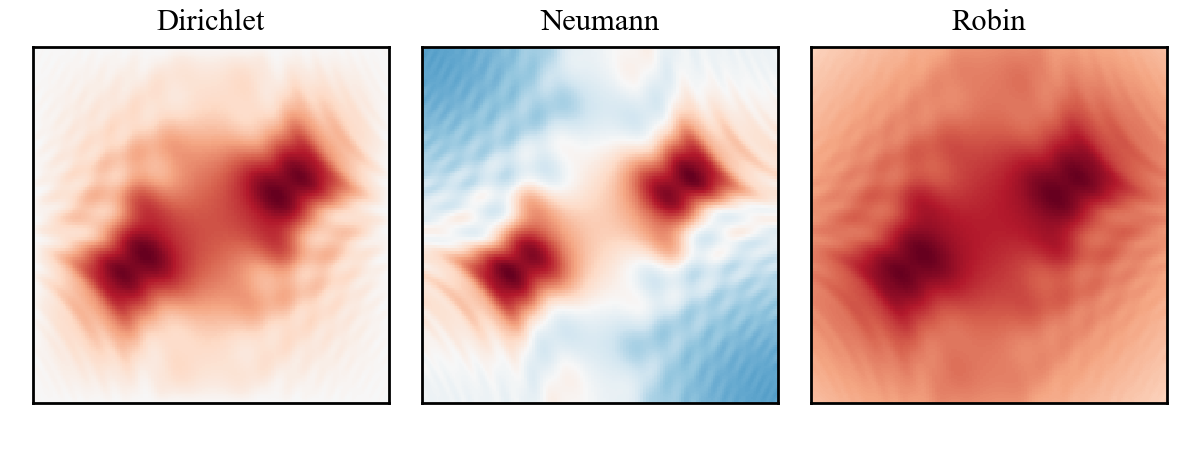}
\caption{Solutions to the forced 2D Poisson equation, computed using the symmetric ultraspherical tau method.
The solution is computed using homogeneous Dirichlet (top left), Neumann (top right), and Robin (bottom left) boundary conditions on each edge.}
\label{fig.example_2d_sols}
\end{figure}

To demonstrate the flexibility of this approach, we solve the PDE with the original homogeneous Dirichlet boundary conditions as well as homogeneous Neumann and Robin boundary conditions.
The solutions for each case are shown in \cref{fig.example_2d_sols}.
The infinity-norm self-convergence of the solutions to that with $N_\mathrm{ref} = 192$ are shown in \cref{fig.example_error}~(left).
In each case, we see supergeometric convergence over 10 decades from the Nyquist limit near $N=50$ until precision effects become important near $N=128$.
We emphasize that the only changes in the solver between the different cases is the definition of the boundary condition operators (and the fixing of an overall gauge $\int u \,\mathrm{d}\vec{x} = 0$ in the Neumann case) -- the tau structures and corner conditions remain identical.

\subsection{3D inhomogeneous biharmonic equation}

\begin{figure}
\centering
\includegraphics[width=0.49\linewidth]{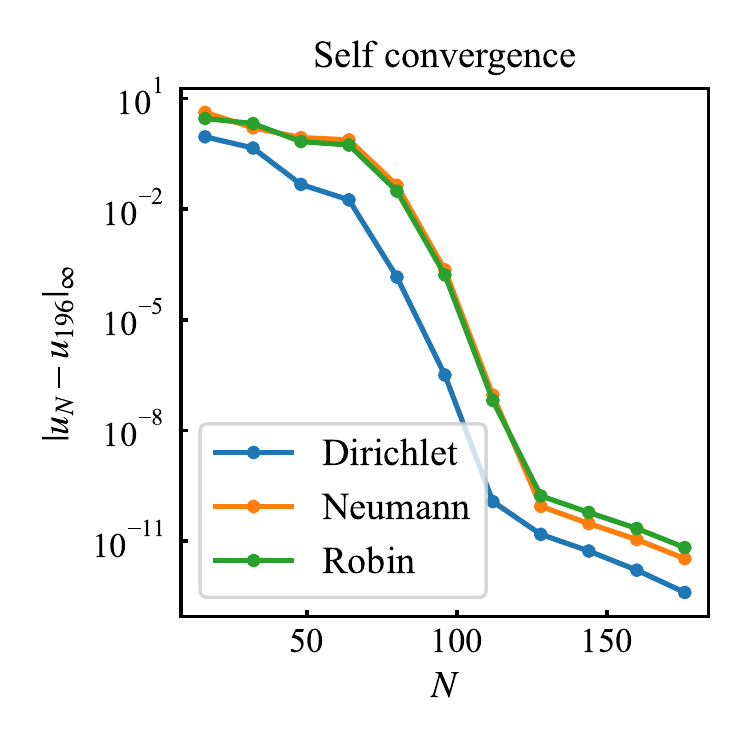}
\includegraphics[width=0.49\linewidth]{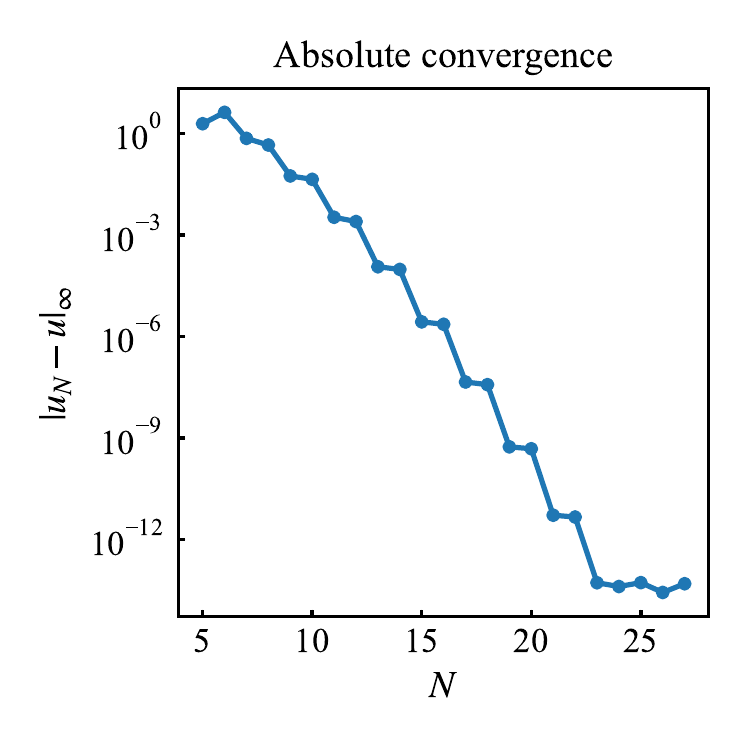}
\caption{Convergence rates for the symmetric ultraspherical tau method for the example problems.
Left: self-convergence for the forced 2D Poisson equation with various boundary conditions.
The error is computed relative to reference solutions with $N_\mathrm{ref}=192$.
Right: convergence for the 3D inhomogeneous biharmonic equation.
The error is computed relative to a simple manufactured solution.
All cases show supergeometric convergence.}
\label{fig.example_error}
\end{figure}

Next we solve the 3D inhomogenious biharmonic equation on $\Omega = [0,1]^3$:
\begin{equation}
    \Delta^2 u(x,y,z) = f(x,y,z).
\end{equation}
We pick a mixed set of boundary operators to illustrate the flexibility of the method: 
\begin{equation}
\begin{matrix}
    \beta_{x}^{(1)} = I_0, & \beta_{x}^{(2)} = I_1, & \beta_{x}^{(3)} = I_0 \partial_x^2, & \beta_{x}^{(4)} = I_1 \partial_x^2, \\
    \beta_{y}^{(1)} = I_0 \partial_{y}, & \beta_{y}^{(2)} = I_1 \partial_y, & \beta_{y}^{(3)} = I_0 \partial_y^3, & \beta_{x}^{(4)} = I_1 \partial_y^3, \\
    \beta_{z}^{(1)} = I_0, & \beta_{z}^{(2)} = I_1, & \beta_{z}^{(3)} = I_0 \partial_z, & \beta_{z}^{(4)} = I_1 \partial_z,
\end{matrix}
\end{equation}
A manufactured solution satisfying homogeneous boundary conditions with these boundary operators is:
\begin{equation}
    u = \sin(2 \pi x) \cos(2 \pi y) (1 - \cos(2 \pi z)),
\end{equation}
and the corresponding RHS term is $f = - (2 \pi)^4 \sin(2 \pi x) \cos(2 \pi y) (9 \cos(2 \pi z) - 4)$.

We discretize and solve the equation with this $f$ using the symmetric ultraspherical tau method.
We use Chebyshev polynomials as our trial basis ($\phi_i = T_i$) and index-4 ultraspherical polynomials as our test basis ($\psi_i = C_i^{(4)}$).
We pick interior tau polynomials as the index-4 ultraspherical polynomials ($P_i = C_{N-i}^{(4)}$), the natural basis for the PDE in the ultraspherical method.
We pick the boundary and edge tau polynomials as the Chebyshev polynomials ($Q_i = T_{N-i}$), the natural basis for the boundary conditions and their cross-combinations.

The convergence of the discrete solution to the true solution is shown in \cref{fig.example_error}~(right).
We see that the method converges supergeometrically over many decades until precision effects become important near $N = 25$.
We again emphasize that the boundary operators can be modified at will without needing to change the tau terms or interface (edge and corner) conditions.

\subsection{2D biharmonic spectrum with various tau bases}

As discussed previously, the generalized tau method enables separating the choice of tau polynomials, which formally determines the solution of the modified PDE over polynomials, from the test and trial functions used to represent and solve for this polynomial solution.
Here we demonstrate this capability by solving for the eigenvalues $\sigma$ of the 2D biharmonic equation on $\Omega = [0,1]^2$:
\begin{equation}
    \Delta^2 u(x,y) = \sigma \, u(x,y).
\end{equation}
We pick simply supported boundary conditions on all sides:
\begin{equation}
\begin{matrix}
    \beta_{x}^{(1)} = I_0, & \beta_{x}^{(2)} = I_1, & \beta_{x}^{(3)} = I_0 \partial_x^2, & \beta_{x}^{(4)} = I_1 \partial_x^2, \\
    \beta_{y}^{(1)} = I_0, & \beta_{y}^{(2)} = I_1, & \beta_{y}^{(3)} = I_0 \partial_y^2, & \beta_{x}^{(4)} = I_1 \partial_y^2.
\end{matrix}
\end{equation}
The eigenfunctions of this system take the form $u(x,y) = \sin(m \pi x) \sin(n \pi y)$ with corresponding eigenvalues $\sigma = \pi^4 (m^2 + n^2)^2$ for $m,n \in \mathbb{N}$.

We discretize and solve the equation using the symmetric ultraspherical tau method.
We use Chebyshev polynomials as our trial basis ($\phi_i = T_i$) and index-4 ultraspherical polynomials as our test basis ($\psi_i = C_i^{(4)}$).
We pick the interior tau polynomials as ultraspherical polynomials ($P_i = C_{N-i}^{(\alpha)}$) with $\alpha$ varying from 0 to 4.
We pick the boundary tau polynomials as the Chebyshev polynomials ($Q_i = T_{N-i}$), the natural basis for the boundary conditions.

\begin{figure}
\centering
\includegraphics[width=0.7\linewidth]{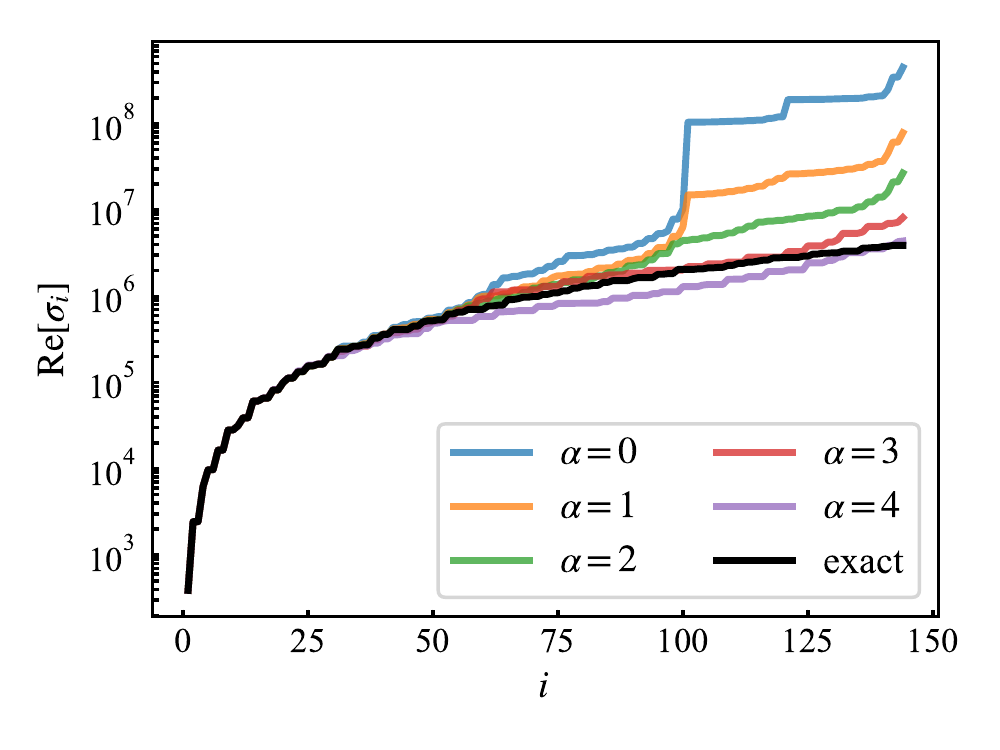}
\caption{Computed and exact eigenvalues for the 2D biharmonic equation with simply supported boundary conditions.
The symmetric ultraspherical tau method is used with different interior tau polynomials from the ultraspherical families, as indicated by $\alpha$.}
\label{fig.biharm_evals}
\end{figure}

\cref{fig.biharm_evals} shows the resulting discrete eigenvalues as a function of $\alpha$.
We see that the choice of tau polynomials effects the accuracy of the spectrum for large $m$ and $n$.
In particular, low $\alpha$ result in highly spurious modes.
Rectangular collocation and the classical tau method correspond to $\alpha = 0$, which has a highly compromised spectrum.
The ultraspherical tau method with $\alpha = 4$ has less error in the eigenvalues, but increasing $\alpha$ does lower the accuracy in the intermediate ``resolved'' portion of the spectrum.

The optimal choice of tau polynomial for most equations remains an open question.
The generalized tau method presented here enables the study and comparison of different tau choices in a common numerical framework.
By studying the impact of different tau choices for common equations, we aim to eventually enable the fully automatic inclusion of boundary conditions in an optimal fashion for general polynomial spectral methods.

\section*{Acknowledgments}
We thank Ben Brown, Daniel Lecoanet, Jeff Oishi, Sheehan Olver, and Alex Townsend for helpful discussions.

\bibliographystyle{siamplain}
\bibliography{references}

\end{document}